\newtheorem{theorem}{Theorem}[section]
\newtheorem{proposition}[theorem]{Proposition}
\newtheorem{corollary}[theorem]{Corollary}
\newtheorem{lemma}[theorem]{Lemma}
\newtheorem{remark}[theorem]{Remark}
\newcommand{\p}{\partial}
\newcommand{\cA}{{\mathcal A}}
\newcommand{\cB}{{\mathcal B}}
\newcommand{\cD}{{\mathcal D}}
\newcommand{\cG}{{\mathcal G}}
\newcommand{\cJ}{{\mathcal J}}
\newcommand{\R}{{\mathbb R}}
\newcommand{\Z}{{\mathbb Z}}
\newcommand{\T}{{\mathbb T}}
\newcommand{\N}{{\mathbb N}}
\DeclareMathOperator{\sech}{sech}
\newtheorem*{TA}{Theorem A}
\begin{document}

\title[ Non-Isotropic Schr\"odinger ]{Well-posedeness for the non-isotropic Schr\"odinger equations on cylinders and periodic domains }
\author[A.J. Corcho]{ Ad\'an J. Corcho}
\address{Department of Mathematics, University of C\'ordoba, C\'ordoba, Spain.}
\email{a.corcho@uco.es}
\thanks{}
\author{Marcelo Nogueira}
\address{Universidade Federal de Ouro Preto, Campus Mariana - ICSA, 35420-057, Mariana, MG, Brazil.}
\email{marcelo.nogueira@ufop.edu.br}
\thanks{}
\author{Mahendra Panthee}
\address{Department of Mathematics, University of Campinas, 13083-859, Campinas, SP,  Brazil.}
\email{mpanthee@ime.unicamp.br}
\thanks{}

\keywords{Non-isotropic Schr\"odinger equation, Initial value problem, Local and global well-posedness, Strichartz estimates}
\subjclass[2020]{35Q55, 35Q60}


\begin{abstract}
The initial value problem (IVP) for the non-isotropic Schr\"odinger   equation posed on the two-dimensional cylinders and $\T^2$ is considered. The IVP is shown to be locally well-posed for small initial data in  $H^s(\T\times\R)$ if $s\geq0$. For the IVP posed on $\R\times\T$, given data are  considered in the anisotropic Sobolev spaces thereby obtaining the local well-posedness result in $H^{s_1, s_2}(\R\times\T)$, if $s_1\geq0$ and $s_2>\frac12$. In the purely periodic case, a particular case of the IVP is shown to be locally well-posed for any given initial data in $H^s(\T^2)$ if $s>\frac14$.  In some cases, ill-posedness issues are also considered showing that the IVP posed on $\T\times \R$, in the focusing case, is ill-posed in the sense that the application data-solution fails to be uniformly continuous for data in $H^s(\T\times\R)$ if $-\frac12\leq s<0$.
\end{abstract}

\maketitle

\setcounter{equation}{0}
\section{Introduction}
In this work, we study the initial value problem (IVP) for the non-isotropic Schr\"odinger (NI-NLS) equation posed on two-dimensional domains
$\cD=\T\times \R$,   $\cD=\R\times \T$  or $\cD=\T\times \T$ with $\T=\R/2\pi\Z$. More precisely, we consider 

\begin{equation}\label{model}
\begin{cases}
i \partial_t u +  \varepsilon\partial_x^2u + \partial_y^2u+\alpha  \partial_x^{4}u = \pm |u|^2 u, & t\in \R,\; (x, y) \in \cD,\medskip \\
u(0, x, y) = \phi(x,y),   &(x, y) \in \cD,
\end{cases}
\end{equation}
where $u$ is a complex valued function, $\varepsilon \in \{0, 1\}$ and  $\alpha \in \R\backslash\{0\}$.

The  unitary group associated with the free evolution of the NI-NLS equation in \eqref{model} is given by 
\begin{equation}\label{group-1}
S_{\alpha, \varepsilon}(t) := e^{i t (\varepsilon\partial_{x}^2 + \partial_{y}^{2}  + \alpha  \partial_{x}^{4}  ) }. 
\end{equation}
By using the Fourier variables $(k_1, k_2)$ belonging to $\Z\times \R$, $\R\times \Z$ or $\Z^2$, the unitary group $S_{\alpha, \varepsilon}(t)$ is defined  by
\begin{equation}\label{group-2}
\big[S_{\alpha, \varepsilon}(t)f\big]^{\wedge}(k_1, k_2)=e^{-it(\omega_{\alpha, \varepsilon}(k_1) +k_2^2)}\widehat{f}(k_1, k_2), 
\end{equation}
with
\begin{equation}\label{group-3}
\omega_{\alpha, \varepsilon}(k_1)= \varepsilon k_1^2 -\alpha k_1^4, 
\end{equation}
where 
\begin{equation}\label{FourierTransf}
\widehat{f}(k_1,k_2)=C_{\cD}\iint_{\cD}f(x,y)e^{-i(xk_1 + yk_2)}dxdy
\end{equation}
with a suitable constant $C_{\cD}$. In what follows the Fourier variables $(k_1, k_2)$ will be renamed by $(n,\xi)$ when $\cD=\T\times \R$, $(\xi, n)$ when $\cD=\R\times \T$, and $(n_1, n_2)$ when $\cD=\T^2$.

\medskip 
The NI-NLS equation \eqref{model} with $\varepsilon=1$  is widely used in fiber arrays and was introduced in \cite{FP-1, FP-2} while studying the effect of small perturbations on critical self-focusing by reducing the perturbed critical NLS equation  to a simpler system of modulation equations.  This sort of analysis is motivated by the physical application because the faster propagation in the optical fibers may be attained using an array of coupled optical waveguides arranged on a line in which the pulses undergo 2d self-focusing \cite{FP-2}. This model has also been used in other physical situations, for example, to describe the propagation of solitons in fiber arrays \cite{AAT} and of ultra-short laser pulses in a planar waveguide medium with anomalous time dispersion \cite{WF2002}. For detailed physical motivation and other practical applications, we refer to \cite{ FIS, FP-1, FP-2}. There are several works on the fourth order NLS equations, for example \cite{HN2021, NP2015, W2006} are just a few to mention. 

\medskip 
Sufficiently regular solutions of the  NI-NLS  equation \eqref{model} for $\varepsilon=1$ enjoy  the following conservation laws, viz., the mass
\begin{equation}\label{Mass}
\mathcal{M}(t) := \iint_{D}|u(t,x,y)|^2dxdy =  \mathcal{M}(0)     
\end{equation}
and energy
\begin{equation}\label{Energy}
 \mathcal{E}(t) :=  \iint_{D}\Big( |\nabla u(t,x,y)|^2 -  \alpha |\p^2_xu(t,x,y)|^2  \pm \tfrac{1}{4} |u(t,x,y)|^4\Big)dxdy = \mathcal{E}(0).    
\end{equation}

The well-posedness issues for the IVP \eqref{model} for $\varepsilon=1$ posed on the continuous domain $\R^n$ are widely considered in the literature, see for example \cite{GC2008, GC2009, SG2019} and references therein. In particular, the authors in \cite{GC2008} derived dispersive estimates exploiting the time decay property  of the associated group and proved the local well-posedness result for given data in $H^s(\R^2)$, $s\geq 0$. Also, using the $L^2$ conserved quantity \eqref{Mass}
$$\|u( t, \cdot )\|_{L^2(\R^2)}= \|\phi\|_{L^2(\R^2)},$$
satisfied by the flow of \eqref{model} they obtained the global well-posedness result for the $L^2$-data. For the problems posed on higher dimension and with general nonlinearities we refer to \cite{SG2019} and references therein.

\medskip 
Recently, the study of the IVPs posed on the product spaces, like cylinders $\R\times \T$ has attracted much attention of several authors, see for example 
\cite{BCP2021,   HTT2014, TT2001, TTV2014} and references therein. Particularly, we mention the result in \cite{TT2001} where the authors proved that the IVP \eqref{model} with $\alpha=0$ and $\varepsilon=1$ is globally well-posed for small data in $L^2(\R\times\T)$
or  $L^2(\T\times\R)$. This result coincides with the one for data in $L^2(\R^2)$ \cite{CW1990, Tsu1987} improving the purely periodic case where the well-posedness result holds for data in $H^s(\T\times\T)$, $s>0$ \cite{Bo1993, Bo1999}. Motivated from this result, it is natural to ask whether one can obtain a better well-posedness results for the IVP \eqref{model} posed on domains $\T\times \R$, $\R\times \T$ or $\T^2$ when $\alpha \neq 0$. We dedicate this work to respond this question.

\medskip
Before establishing the main results, we record some notations that will be used throughout this work.

\begin{itemize}
	\item $|J|$  denotes the Lebesgue measure of a set $J\subset \R$,
	\item for any $A, B >0$,\;   $A\lesssim B$ means that there exists a positive constant $c$ such that $A\leq cB$,
	\item given two real numbers $a$ and $b$ we set $a \vee b:= \min\{a, b\}$ and $a\wedge b:= \max\{a, b\}$,
	\item $\mathfrak{m}(\cdot)$ denotes the product measure of the one-dimensional Lebesgue measure of a measurable set $J\subset \R$ and counting measure,
	\item $\lfloor x \rfloor$  denotes the integer part of $x$.
\end{itemize}

\section{Statement of the main results}\label{sec-2}

\setcounter{equation}{0}

In this section, we present the main results of this work. For the sake of clarity, we separate the results for the IVP posed on $\T\times\R$, $\R\times\T$ and $\T^2$ in each subsection.

\subsection{Well-posedness on the cylindrical domain $\T\times\R$} Our first result concerns obtaining an $L^4\!\!-\!\!L^2$ Strichartz estimate on cylinder $\T\times \R$ for the group $S_{\alpha, \varepsilon}(t)$, similar to 
the one obtained for $\alpha=0$ in \cite{TT2001} for the group $S(t) := e^{i t (\partial_{x}^2 + \partial_{y}^{2})}.$
More precisely, considering  $\alpha<0$ we will prove the following result.

\begin{proposition}[Strichartz estimate on $I\times\T\times\R$]\label{Th-Strichartz}
	Let $\alpha<0$ be a fixed real number and $I\subset\R_t$ an interval containing $t=0$. Then, there exists a positive constant $C_I$, depending only on the length of $I$, such that 
	\begin{equation}\label{Strichartz}
	\|S_{\alpha, \varepsilon}(t)\phi\|_{L^4(I\times\T\times \R)}\le C_I \|\phi\|_{L^2(\T\times\R)},
	\end{equation}
	for any $\phi \in L^2(\T\times\R)$. Moreover, there exists a positive constant $\widetilde{C}_I$, depending only on the length of $I$, such that 
	\begin{equation}\label{NH-Strichartz}
		\left\|\int_0^tS_{\alpha, \varepsilon}(t-t')f(t',\cdot)dt'\right\|_{L^4(I\times\T\times \R)}\leq \widetilde{C}_I\left\| f\right\| _{L^{4/3}(I\times\T\times\R)},
	\end{equation}
	for any $f\in L^{4/3}(I\times\T\times\R)$.
\end{proposition}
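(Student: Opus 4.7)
The strategy is the standard $TT^*$ method, in the same spirit as the proof of the $L^4$ Strichartz estimate of Takaoka--Tzvetkov on $\R\times\T$. I would focus first on the homogeneous estimate \eqref{Strichartz} and then derive the retarded bound \eqref{NH-Strichartz} by duality combined with the Christ--Kiselev lemma.

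For \eqref{Strichartz}, the starting point is the identity $\|S_{\alpha,\varepsilon}(t)\phi\|_{L^4}^{2}=\bigl\||S_{\alpha,\varepsilon}(t)\phi|^{2}\bigr\|_{L^2}$. Writing $u:=S_{\alpha,\varepsilon}(t)\phi$ and using \eqref{group-2}, a direct Fourier computation, with variables $(\tau,m,\eta)$ dual to $(t,x,y)$, gives
\begin{equation*}
\widehat{u\bar u}(\tau,m,\eta)=\frac{1}{|2\eta|}\sum_{n\in\Z}\widehat{\phi}(n+m,\xi_1(n))\,\overline{\widehat{\phi}(n,\xi_2(n))},
\end{equation*}
where $\xi_2(n)=-(\tau+\omega_{\alpha,\varepsilon}(n+m)-\omega_{\alpha,\varepsilon}(n)+\eta^2)/(2\eta)$ and $\xi_1(n)=\xi_2(n)+\eta$ come from solving the two delta functions enforcing conservation of the Fourier variables. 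Applying Plancherel in all three variables, Cauchy--Schwarz in the sum over $n$, and the change of variables $(\tau,\eta)\mapsto(\xi_1,\xi_2)$ (whose Jacobian equals $|2\eta|$), reduces matters to counting the integers $n$ that can lie on a given resonance set.

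The assumption $\alpha<0$ is crucial at this step. The derivative of $n\mapsto\omega_{\alpha,\varepsilon}(n+m)-\omega_{\alpha,\varepsilon}(n)$ equals
\begin{equation*}
2m\bigl(\varepsilon+6|\alpha|n^2+6|\alpha|mn+2|\alpha|m^2\bigr),
\end{equation*}
and the quadratic factor has discriminant $-12|\alpha|^2 m^2-24|\alpha|\varepsilon\leq 0$ for $\varepsilon\in\{0,1\}$, hence is positive definite. Consequently the derivative keeps constant sign whenever $m\neq 0$, the resonance function is strictly monotone in $n$, and every level set contains at most one integer; this yields the desired control of $\|u\bar u\|_{L^2}$ by $\|\phi\|_{L^2}^{4}$. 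The main obstacle I anticipate is the degenerate region $|\eta|\ll 1$, where the factor $1/|2\eta|$ fails to be locally integrable; this is essentially an obstruction to a global-in-time estimate (owing to the periodicity in $x$) and is removed by inserting a smooth cutoff $\chi_I(t)$, so that the delta distributions get replaced by Schwartz approximants of scale $|I|^{-1}$. This smooths the singularity at the cost of a constant $C_I$ depending only on the length of $I$.

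Finally, \eqref{NH-Strichartz} is deduced from \eqref{Strichartz} by the standard $TT^*$ procedure: by duality, \eqref{Strichartz} is equivalent to $\left\|\int_I S_{\alpha,\varepsilon}(-t')f(t',\cdot)\,dt'\right\|_{L^2(\T\times\R)}\leq C_I\|f\|_{L^{4/3}(I\times\T\times\R)}$, and composing with the homogeneous bound yields the non-retarded inhomogeneous estimate $\left\|\int_I S_{\alpha,\varepsilon}(t-t')f(t',\cdot)\,dt'\right\|_{L^4(I\times\T\times\R)}\lesssim C_I^{2}\|f\|_{L^{4/3}(I\times\T\times\R)}$. The Christ--Kiselev lemma, applicable since $4/3<4$, then upgrades this to the retarded form \eqref{NH-Strichartz} with $\widetilde{C}_I\lesssim C_I^{2}$.
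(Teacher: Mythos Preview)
Your plan has a genuine gap at the counting step. When you pass to $|u|^{2}=u\bar u$, the phase that survives on the Fourier side is $\omega_{\alpha,\varepsilon}(n+m)-\omega_{\alpha,\varepsilon}(n)+\xi_1^{2}-\xi_2^{2}$, and since $\xi_1^{2}-(\xi_1-\eta)^{2}=2\eta\xi_1-\eta^{2}$ is \emph{linear} in $\xi_1$, the delta (or its smoothed version) picks, for fixed $(\tau,m,\eta)$, exactly one $\xi_1(n)$ for \emph{every} integer $n$. Thus the sum over $n$ in your formula for $\widehat{u\bar u}(\tau,m,\eta)$ runs over all of $\Z$, not over a finite resonance set; Cauchy--Schwarz in $n$ then produces the divergent factor $\#\{n\}=\infty$, and the change of variables $(\tau,\eta)\mapsto(\xi_1,\xi_2)$ cannot help because it is $n$-dependent. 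Your monotonicity observation about $n\mapsto\omega_{\alpha,\varepsilon}(n+m)-\omega_{\alpha,\varepsilon}(n)$ is correct, but it only says that the points $\xi_1(n)$ are distinct; it does not bound the number of terms in the sum, nor does it provide any almost-orthogonality of the summands for generic $L^{2}$ data $\widehat\phi$. The time cutoff does not cure this: it thickens the delta in $\tau$ but leaves the $(n,\xi_1)$-structure (linear in $\xi_1$, unconstrained in $n$) unchanged.

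The paper avoids this by working with the product $u_1u_2$ (equivalently $u\cdot u$) rather than $u\bar u$. There the phase is $\omega_{\alpha,\varepsilon}(n_1)+\omega_{\alpha,\varepsilon}(n-n_1)+\xi_1^{2}+(\xi-\xi_1)^{2}$, which after completing the square is \emph{quadratic} in $\xi_1$. Together with the positivity of $\omega_{\alpha,\varepsilon}$ (for $\alpha<0$) this confines both $n_1$ and $\xi_1$ to a bounded region, and the whole argument reduces to the measure estimate of Lemma~\ref{crucial-lem}: $\mathfrak m\big(\{(n,\xi):C\le \xi^{2}+\tfrac12\omega(n)+\tfrac12\omega(n-n_0)\le C+K\}\big)\lesssim K$. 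This lemma, proved via the explicit inverse $x^{*}(\gamma)$ of $\omega$, is the heart of the matter and has no analogue in the $u\bar u$ set-up. Your treatment of the inhomogeneous estimate \eqref{NH-Strichartz} via duality and Christ--Kiselev is standard and fine once \eqref{Strichartz} is in hand.
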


As in \cite{TT2001}, in the context of IVP  for the cubic elliptic NLS, Proposition \ref{Th-Strichartz} combined with Picard iteration scheme
applied to the integral equation
\begin{equation}\label{int-equation}
	u(t)= S_{\alpha, \varepsilon}(t)\phi \mp i\int_0^t S_{\alpha, \varepsilon}(t-t')|u(t')|^2u(t')dt'
\end{equation}		
imply the following result:
\begin{theorem}[Well-posedness in $L^2$]\label{Th-WP-L2}
Let $\alpha <0$. Then the  IVP \eqref{model} is globally well-posed for sufficiently small initial data $\phi \in L^2(\T\times\R)$.
\end{theorem}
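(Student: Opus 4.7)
The plan is to run a standard Picard contraction argument on the Duhamel integral formulation \eqref{int-equation}, using a mixed norm tailored to Proposition \ref{Th-Strichartz}. Fix a symmetric interval $I=[-T,T]$ and introduce the resolution set
$$Y_T:=\{u\in C(I;L^2(\T\times\R))\cap L^4(I\times\T\times\R):\|u\|_{Y_T}\le R\},$$
equipped with $\|u\|_{Y_T}:=\|u\|_{L^\infty_tL^2_{x,y}}+\|u\|_{L^4_{t,x,y}}$, where $R>0$ is to be chosen, together with the map
$$\Phi(u)(t):=S_{\alpha,\varepsilon}(t)\phi\mp i\int_0^t S_{\alpha,\varepsilon}(t-t')|u(t')|^2u(t')\,dt'.$$

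For the linear part, $S_{\alpha,\varepsilon}(t)$ is an $L^2$-isometry, while \eqref{Strichartz} controls its $L^4_{t,x,y}$ norm, so $\|S_{\alpha,\varepsilon}(\cdot)\phi\|_{Y_T}\le(1+C_I)\|\phi\|_{L^2}$. For the Duhamel term, \eqref{NH-Strichartz} directly bounds the $L^4_{t,x,y}$ component, and the companion $L^\infty_tL^2_{x,y}$ bound is obtained from the dual of \eqref{Strichartz} by the standard $TT^*$ / Christ--Kiselev machinery. H\"older's inequality in $L^4_{t,x,y}$ then yields
$$\bigl\||u|^2u\bigr\|_{L^{4/3}_{t,x,y}}=\|u\|_{L^4_{t,x,y}}^3,\qquad \bigl\||u|^2u-|v|^2v\bigr\|_{L^{4/3}_{t,x,y}}\lesssim\bigl(\|u\|_{L^4_{t,x,y}}^2+\|v\|_{L^4_{t,x,y}}^2\bigr)\|u-v\|_{L^4_{t,x,y}}.$$
Choosing $R:=2(1+C_I)\|\phi\|_{L^2}$ and requiring $\|\phi\|_{L^2}\le\delta_T$ with $\delta_T>0$ small enough (depending only on $T$ through $C_I$ and $\widetilde{C}_I$) forces $\Phi:Y_T\to Y_T$ to be a $\tfrac12$-contraction, producing a unique fixed point $u\in Y_T$, i.e., a local solution on $I$ together with continuous dependence on $\phi$.

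Globalization then follows from the mass conservation law \eqref{Mass}. For smooth data, \eqref{Mass} is obtained directly by pairing the equation with $\bar{u}$ and integrating; for generic $L^2$ data, it is recovered by regularizing, applying the local theory, and passing to the limit via the continuous dependence just established. Because $\|u(\pm T)\|_{L^2}=\|\phi\|_{L^2}\le\delta_T$, the local theory can be restarted at time $\pm T$ on $[T,2T]$ (resp. $[-2T,-T]$) with the same smallness threshold, and iterating covers all of $\R$.

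The main obstacle is already contained in Proposition \ref{Th-Strichartz}; once the Strichartz pair is in hand, the fixed-point scheme is routine. The only subtlety worth flagging is the passage from \eqref{NH-Strichartz} to the accompanying $L^\infty_tL^2_{x,y}$ control on the Duhamel integral, which is what allows the full $Y_T$-norm to close and is handled cleanly by $TT^*$ combined with the Christ--Kiselev lemma. Beyond that, the argument is a verbatim adaptation to the non-isotropic group $S_{\alpha,\varepsilon}$ of the $L^2$ well-posedness scheme of Takaoka--Tzvetkov \cite{TT2001} for the classical Schr\"odinger group on $\T\times\R$.
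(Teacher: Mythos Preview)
Your proposal is correct and is precisely the approach the paper indicates: the paper does not write out a separate proof of Theorem~\ref{Th-WP-L2} but simply states that Proposition~\ref{Th-Strichartz} combined with the Picard iteration scheme on the integral equation~\eqref{int-equation}, exactly as in Takaoka--Tzvetkov~\cite{TT2001}, yields the result. Your write-up is a faithful fleshing-out of that remark, including the globalization via the conserved mass~\eqref{Mass}; the only minor over-engineering is the appeal to Christ--Kiselev for the $L^\infty_tL^2_{x,y}$ bound on the Duhamel term, which here follows directly from unitarity of $S_{\alpha,\varepsilon}(t)$ together with the dual of~\eqref{Strichartz}.
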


\begin{remark}\label{Rem-2.3}
 As we will see, our approach fails in the case  $\alpha >0$ due to the bad algebraic structure of the symbol in the periodic direction in the sense that  
the polynomial equation $\varepsilon x^2 -\alpha x^4 =\gamma$ possesses real solutions for any positive number $\gamma$  only in the case $\alpha \le 0$.
\end{remark}

\begin{theorem}[Ill-posedness below $L^2$]\label{Th-IP-below-L2}
Let $\alpha <0$ and consider the focusing regime $(-|u|^2u)$ in \eqref{model}. Then the IVP \eqref{model} is ill-posed in the sense that the mapping data-solution is not uniformly continuous 
on bounded sets of initial data in $H^s(\T\times \R)$ whenever $-\frac12\leq s<0$.
\end{theorem}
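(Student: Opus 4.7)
The plan is to follow the Birnir--Kenig--Ponce--Svanstedt--Vega scheme, exploiting the focusing soliton together with a Galilean boost in the non-compact direction $y$. First I would note that any function independent of $x$ solves \eqref{model} with focusing sign if and only if it solves the focusing $1$D cubic NLS $iu_t+\partial_y^2 u = -|u|^2u$ on $\R_y$, since $\partial_x^k u\equiv 0$ kills the roles of $\varepsilon$ and $\alpha$. The standard soliton together with Galilean invariance in $y$ then yields the exact two-parameter family of solutions
\begin{equation*}
u_{a,N}(t,x,y) = \sqrt{2}\,a\,\sech\bigl(a(y-2Nt)\bigr)\, e^{i(Ny - N^2 t + a^2 t)}, \qquad a>0,\; N\in\R.
\end{equation*}

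Next I would calibrate the parameters via the $H^s$ norm. Using $\widehat{\sech(a\cdot)}(\xi) = (\pi/a)\sech(\pi\xi/(2a))$, the Fourier transform of $\phi_{a,N}:=u_{a,N}(0)$ is supported at frequency $(n,\xi)=(0,\xi)$ and concentrated near $\xi=N$ with width $\sim a$. For $a\lesssim N$ the $H^s$-weight is essentially $\langle N\rangle^{2s}$ on that support, so $\|\phi_{a,N}\|_{H^s(\T\times\R)} \sim \langle N\rangle^{s}\sqrt{a}$ uniformly in $t$. Setting $a=a(N):=N^{-2s}$ keeps the family of order $1$ in $H^s$; the restriction $-\tfrac12\le s<0$ is exactly what makes this choice satisfy $a\to\infty$ (so the $L^2$ norm $\sim\sqrt a$ blows up, forcing us truly below $L^2$) while keeping $a\lesssim N$ (so $\phi_{a,N}$ is genuinely frequency-localized near $N$).

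The decoherence mechanism then comes from comparing two such solutions with the same $N$ and amplitudes $a_1=a(1+\mu_1)$, $a_2=a(1+\mu_2)$, $\mu_j\to 0$. A Taylor expansion in $a$ of $\sech(\pi(\xi-N)/(2a))$ gives
\begin{equation*}
\|\phi_{a_1,N} - \phi_{a_2,N}\|_{H^s}^2 \lesssim (a_1-a_2)^2\,\frac{N^{2s}}{a} \sim |\mu_1-\mu_2|^2\,N^{4s},
\end{equation*}
while the nonlinear phases $e^{ia_j^2 t}$ become opposite at the time $t^{*} = \pi/(a_1^2-a_2^2) \sim N^{2s}/|\mu_1-\mu_2|$. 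At $t^*$ both envelopes are centered at $y=2Nt^{*}$ with width $1/a$ (they have the same group velocity $2N$ and amplitudes agreeing to order $\mu_j$), so the cancellation in phase yields $\|u_{a_1,N}(t^{*})-u_{a_2,N}(t^{*})\|_{H^s}\gtrsim 1$. Given $\delta,T>0$, picking $|\mu_1-\mu_2|=N^{2s}/T$ forces $t^{*}\le T$, the initial $H^s$-difference $\sim N^{4s}/T \to 0$ as $N\to\infty$, while the $H^s$-difference at $t^{*}$ stays bounded below. This contradicts uniform continuity on any bounded set of $H^s(\T\times\R)$.

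The main technical obstacle is the sharp Sobolev-level bookkeeping on the two sech profiles: one has to quantify precisely the linearization in $a$ so that the initial difference acquires the gain $N^{4s}$, and at the same time show that the sum at $t=t^{*}$ does \emph{not} acquire any extra cancellation in $H^s$ beyond a factor $O(1)$. The endpoint $s=-\tfrac12$, where $a\sim N$ so that $\phi_{a,N}$ is only marginally concentrated near frequency $N$, is the delicate case and will likely require working with a Littlewood--Paley projection around dyadic scales $|\xi|\sim N$ and checking that both the lower bound $\|u_{a_j,N}(t^{*})\|_{H^s}\gtrsim 1$ and the upper bound on the initial difference survive this localization uniformly in $N$.
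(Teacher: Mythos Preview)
Your approach is genuinely different from the paper's. You reduce to the one-dimensional focusing cubic NLS in the continuous variable $y$ via $x$-independent solutions and use Galilean-boosted solitons; the frequency localization sits at $(n,\xi)=(0,N)$, i.e.\ in the \emph{continuous} direction. The paper instead uses standing waves $u_{n,\theta}(t,x,y)=\sqrt{2\sigma_{n,\theta}}\,e^{i\theta t}e^{inx}\sech(\sqrt{\sigma_{n,\theta}}\,y)$ with $\sigma_{n,\theta}=n^2-\alpha n^4+\theta$; the frequency localization sits at the \emph{periodic} mode $n$, with no Galilean boost in $y$. Your route has the pleasant feature that $\alpha$ and $\varepsilon$ never enter, so it would apply for any $\alpha\in\R$; the paper's route genuinely needs $\alpha<0$ to make $\sigma_{n,\theta}>0$ for large $n$.

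There is, however, a real gap at the endpoint $s=-\tfrac12$ in your scheme. With $a=N^{-2s}=N$ the Fourier profile $\sech\!\big(\pi(\xi-N)/(2N)\big)$ has width comparable to its center, so a fixed positive fraction of its mass lies near $\xi=0$; a direct computation gives
\[
\|\phi_{N,N}\|_{H^{-1/2}(\T\times\R)}^2 \;\sim\; \int_{\R}\frac{1}{1+|\xi|}\,\sech^2\!\Big(\frac{\pi(\xi-N)}{2N}\Big)\,d\xi \;\sim\;\log N,
\]
so your family is \emph{not} uniformly bounded in $H^{-1/2}$ and the argument does not even get off the ground there. The Littlewood--Paley projection you propose destroys the exact-solution property, and without a well-posedness theory in $H^{-1/2}$ you have no way to control the resulting error. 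The paper's construction sidesteps this completely: because its large frequency lives at the discrete mode $n$, the weight $(1+|\xi|+n)^{2s}\le 1$ for every $\xi$ when $s<0$, so the family is trivially bounded in $H^s$ for all $s<0$, including $s=-\tfrac12$; the lower bound at $t>0$ is then obtained by a monotonicity argument in $n$ that uses precisely $1+2s\ge 0$. For the open range $-\tfrac12<s<0$ your argument is essentially correct (modulo minor scaling slips: $t^*\sim N^{4s}/|\mu_1-\mu_2|$ rather than $N^{2s}$, and the initial $H^s$-difference is $\sim|\mu_1-\mu_2|$ rather than $|\mu_1-\mu_2|\,N^{2s}$), but at the endpoint you need the paper's periodic-mode construction or a different idea.
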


\begin{remark}
  If we consider the  IVP \eqref{model} with initial data $\phi$ depending only on the $y$-variable and $\phi\in H^s(\R)$, it follows that 
	$\tilde{\phi}(x, y):=\phi(y)\in  H^s(\T\times\R)$ with
	$$\|\tilde{\phi}\|_{H^s(\T\times\R)}=\|\phi\|_{H^s(\R)},$$
	and the solutions of the IVP 
	\begin{equation}\label{Ex-2}
		\begin{cases}
			i\p_tw+ \p^2_yw=\pm|w|^2w,\qquad & y\in \mathbb{R},\;\; t\in \R,\medskip \\
			w(y,0)=\phi(y),
		\end{cases}
	\end{equation}
	are also solutions of \eqref{model}. Assuming the existence of local solutions, the IVP \eqref{Ex-2} is  ill-posed for $s\le -\frac12$ in both cases (focusing and defocusing) in the following sense:
	for any $\delta>0$  the  flow-map
	$$
		\Phi: u_0\in H^s(\R) \longmapsto w\in C([0,\delta];  H^s(\R))
    $$
    is discontinuous everywhere in $H^s(\R)$ (norm inflation argument, see \cite{Oh}). In some sense, this remark complements Theorem \ref{Th-IP-below-L2}.
\end{remark}

\subsection{Well-posedness on the cylindrical domain $\R\times\T$}
For the IVP \eqref{model} posed on the cylindrical domain $\R\times\T$, the structure of the symbol involved created an obstacle to find Strichartz estimate of the form given in Proposition  \ref{Th-Strichartz}. However, we exploited the  one dimensional Strichartz estimate in the $x$-variable, proved in \cite{GC2009}, and used it to get a new Strichartz-type estimate involving the periodic $y$-variable as well, see Proposition \ref{PropI} below. Using this new Strichartz-type estimate we obtain some local well-posedness results for given initial data in $H^s(\R\times\T)$.

\medskip 
In this case, first we consider the given initial data in the anisotropic Sobolev spaces  $\mathcal{H}_{x,y}^{s_1,s_2}(\R\times\T)$ defined as the completion of $C_0^{\infty} (\mathbb{R} \times \mathbb{T})$ with respect to the norm 
\begin{equation}\label{Anisot-Spaces}
\|f\|_{\mathcal{H}_{x,y}^{s_1,s_2}} = \|(1 - \partial_x^2)^{s_1/2}(1 - \partial_y^2)^{s_2/2} f \|_{L^{2}(\mathbb{R}\times \mathbb{T})} = \|J_x^{s_1}J_y^{s_2} f \|_{L^{2}(\mathbb{R} \times \mathbb{T})}. 
\end{equation}
In particular, for $(s_1, s_2) = (0, s)$, we have 
\[
\|f\|_{\mathcal{H}_{x,y}^{0, s}} = \|J^{s}_y f \|_{L^2_{x,y}}.
\]
In this setting, we prove the following local result.

\begin{theorem}\label{Th-LWP-aniso}
Let $\alpha \neq 0$. Then the  IVP \eqref{model} is locally well-posed for  any given initial data $\phi \in \mathcal{H}_{x,y}^{0,s}(\R\times\T)$ whenever $s>\frac12$. 
\end{theorem}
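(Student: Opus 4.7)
The plan is to solve the Duhamel formulation \eqref{int-equation} by a Banach contraction argument in a Strichartz-type space, using the mixed Strichartz estimate of Proposition \ref{PropI} to handle the $x$-variable and the Sobolev algebra property of $H^s(\T)$ (valid precisely for $s>\tfrac12$) to handle the periodic $y$-variable. A preliminary observation is that the operator $J^s_y=(1-\p_y^2)^{s/2}$ commutes with the free group $S_{\alpha,\varepsilon}(t)$, since the symbol in \eqref{group-2}--\eqref{group-3} separates as $\omega_{\alpha,\varepsilon}(k_1)+k_2^2$. This allows us to apply $J^s_y$ to both sides of \eqref{int-equation} and reduce matters to $L^2_y$-based estimates on $v:=J^s_y u$, with initial data $J^s_y\phi\in L^2(\R\times\T)$.

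Concretely, I would work in the resolution space
\[
X_T:=\Big\{u :\; \|u\|_{X_T}:=\|J^s_y u\|_{L^{\infty}_T L^2_{xy}}+\|J^s_y u\|_{Y_T}<\infty\Big\},
\]
where $Y_T=L^q_T L^r_x L^2_y$ is built from an admissible pair $(q,r)$ for the one-dimensional fourth-order Schr\"odinger flow in $x$, as furnished by Proposition \ref{PropI}. The linear estimate for $S_{\alpha,\varepsilon}(t)\phi$ in $X_T$ is then immediate from the $L^2_{xy}$-unitarity of $S_{\alpha,\varepsilon}$ together with Proposition \ref{PropI}.

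The heart of the argument is the nonlinear estimate. Since $s>\tfrac12$, $H^s(\T)$ is a Banach algebra, so pointwise in $(t,x)$
\[
\|J^s_y(|u|^2u)(t,x,\cdot)\|_{L^2_y}\;\lesssim\; \|J^s_y u(t,x,\cdot)\|_{L^2_y}^{3}.
\]
Plugging this into the dual Strichartz estimate associated with Proposition \ref{PropI} and invoking H\"older in $(t,x)$ produces a bound of the shape $\|\,|u|^2u\,\|_{\text{(dual Strichartz)}}\lesssim T^{\theta}\|u\|_{X_T}^{3}$ for some $\theta>0$. The matching Lipschitz estimate for $|u|^2u-|v|^2v$ follows from the identical scheme applied to its telescoping expansion. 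A standard contraction on a small ball of $X_T$ then yields local existence, uniqueness and continuous dependence in $\mathcal{H}^{0,s}_{x,y}(\R\times\T)$, with a lifespan depending only on $\|\phi\|_{\mathcal{H}^{0,s}_{x,y}}$.

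The main obstacle is the bookkeeping of exponents in the nonlinear estimate: one must select admissible pairs for Proposition \ref{PropI} and its dual partner so that H\"older in $(t,x)$, applied to three copies of $\|J^s_y u\|_{L^2_y}$, generates a strictly positive power of $T$; otherwise the contraction will not close. This is also where the hypothesis $s>\tfrac12$ enters essentially: for $s\le\tfrac12$ the pointwise algebra property breaks down, and one would have to replace it with a Kato--Ponce type fractional Leibniz rule in $y$ while borrowing derivatives from $x$, which would in turn force extra $x$-regularity on the data and rule out the case $s_1=0$ covered here.
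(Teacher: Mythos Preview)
Your proposal is correct and follows the paper's argument essentially verbatim: the paper takes the admissible pair $(q,r)=(12,6)$, works in $L^\infty_T\mathcal{H}^{0,s}_{x,y}\cap L^{12}_T L^6_x H^s_y$, uses the $H^s_y$-algebra property pointwise in $(t,x)$ exactly as you describe, and closes via H\"older in $(t,x)$ to produce the factor $T^{3/4}$. The only cosmetic difference is that the paper handles the Duhamel term by Minkowski in $\tau$ followed by the homogeneous Strichartz estimate rather than invoking the dual Strichartz norm directly---which amounts to the endpoint choice $(\gamma',\rho')=(1,2)$ in your framework.
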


\subsection{Well-posedness on the periodic domain  $\T^2$}

For the IVP \eqref{model} posed on the purely periodic domain $\T^2$, we use decoupling theory developed by Bourgain-Demeter \cite{BD2015} to get a new Strichartz estimate. More precisely, we use the following result from \cite{BD2015}.

\begin{TA}\label{The-Decoup}
Let $S$ be a compact $C^2$ hypersurface in $\R^n$ with positive definite second fundamental form. Let $\Lambda\subset S$ be a $\delta^{\frac12}$-separated set, and let $R\gtrsim \delta^{-1}$. Then for each $\epsilon>0$,
\begin{equation} \label{eq-decoup}
\Big(\frac1{|B_R|}\int_{B_R}\Big|\sum_{\xi\in\Lambda}a_{\xi}e^{2\pi i\, \xi\cdot x}\Big|^p\Big)^{\frac1p}\lesssim_{\epsilon} \delta^{\frac{n+1}{2p}-\frac{n-1}{4}-\epsilon}\|a_{\xi}\|_{L^2},
\end{equation}
if $p\geq \frac{2(n+1)}{n-1}$.

\end{TA}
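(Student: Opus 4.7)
The statement is the celebrated $\ell^2$-decoupling theorem of Bourgain and Demeter, and my plan is to sketch their induction-on-scales proof, which combines parabolic rescaling with the multilinear Kakeya inequality. First, the claim at general $p\ge p_c:=\frac{2(n+1)}{n-1}$ follows from the critical case $p=p_c$ by H\"older interpolation with the trivial $L^\infty$ bound $\|\sum_\xi a_\xi e^{2\pi i\xi\cdot x}\|_\infty \le |\Lambda|^{1/2}\|a_\xi\|_{\ell^2}\lesssim \delta^{-(n-1)/4}\|a_\xi\|_{\ell^2}$, so the heart of the matter is to show that, with $D(\delta)$ denoting the best constant in \eqref{eq-decoup} at scale $\delta$ and $p=p_c$, one has $D(\delta)\lesssim_\epsilon \delta^{-\epsilon}$.

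The argument is an induction on scales producing a recursion between $D(\delta)$ and $D(\delta^{1/2})$. After a localization and an affine change of variables, justified by the $C^2$ regularity and positive definiteness of the second fundamental form, $S$ may be treated as a paraboloid at each scale. One partitions $S$ into caps at an intermediate scale $\rho\in(\delta^{1/2},1)$; parabolic rescaling identifies each $\rho$-cap together with its $\delta^{1/2}$-subcaps with an $\ell^2$-decoupling problem at relative scale $\delta^{1/2}/\rho$ on a set of diameter $\asymp 1$, so the induction hypothesis supplies the inner decoupling at cost $(\delta^{1/2}/\rho)^{-\epsilon}$.

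The key external input is the $k$-linear Kakeya/restriction inequality of Bennett-Carbery-Tao, inserted into the Bourgain-Guth broad/narrow dichotomy. On each physical ball of radius $\rho^{-2}$ inside $B_R$, either the $L^{p_c}$ mass is dominated by $O(1)$ caps (the narrow case, treated by pigeonholing plus decoupling at a coarser scale) or by $k$ transversal caps (the broad case, handled by the $k$-linear estimate with essentially no loss). Summing over balls and integrating in $x$ yields a functional inequality of the form $D(\delta)\le C_\epsilon\,\delta^{-\epsilon/2}\,D(\delta^{1/2})^{\theta}$ for some $\theta\in(0,1)$ depending on $n$ and $k$, which when iterated $O(\log\log\delta^{-1})$ times starting from the trivial polynomial bound $D(\delta)\lesssim \delta^{-O(1)}$ yields the desired $D(\delta)\lesssim_\epsilon\delta^{-\epsilon}$.

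The main obstacle, and the step where care is indispensable, is calibrating the exponents so that the recursion actually closes with arbitrarily small loss: the intermediate scale $\rho$, the multilinearity parameter $k$, and the broad/narrow threshold must be tuned so that the multilinear Kakeya input and the rescaled induction hypothesis balance at the critical exponent $p_c$, which is precisely where the hypothesis $p\ge 2(n+1)/(n-1)$ originates. A secondary technical point is the uniform reduction, at each scale, of a general $C^2$ hypersurface to its osculating paraboloid, which is routine affine geometry once positivity of the second fundamental form is invoked but must be tracked carefully to keep the constants independent of $\delta$.
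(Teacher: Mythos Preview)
The paper does not prove Theorem~A at all: it is quoted verbatim from Bourgain--Demeter \cite{BD2015} and used as a black box in the proof of Proposition~\ref{Prop-St2}. So there is no ``paper's own proof'' to compare your proposal against.

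That said, your sketch is a fair high-level caricature of the Bourgain--Demeter argument (reduction to the critical exponent, induction on scales via parabolic rescaling, Bourgain--Guth broad/narrow decomposition, and multilinear Kakeya as the external input). Be aware, though, that the actual recursion is considerably more delicate than the single inequality $D(\delta)\le C_\epsilon\,\delta^{-\epsilon/2}D(\delta^{1/2})^{\theta}$ you wrote: in \cite{BD2015} one tracks a two-parameter family of quantities and the bootstrap closes only after a careful interpolation between the linear and multilinear estimates across many scales. As written, your recursion with a fixed $\theta<1$ would not by itself give $D(\delta)\lesssim_\epsilon\delta^{-\epsilon}$ starting from a polynomial bound; the genuine mechanism is more intricate. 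For the purposes of this paper, however, none of that is needed---Theorem~A is simply imported.
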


It is worth emphasizing that in the application of Theorem A, homogeneity in each variable of the linear part plays a crucial role in obtaining a compact hypersurface with a positive definite second fundamental form. In our case, the lack of homogeneity in the $x$-variable creates an additional obstacle in obtaining such a surface. This fact compelled us to consider a particular case of the IVP~\eqref{model} with $\varepsilon =0$ so as to ensure homogeneity in both $x$ and $y$ variables separately even though  it is non-isotropic. With this consideration, using  Theorem A we  obtain the following $L^4-L^2$-Strichartz estimate with $\frac18+\epsilon$ derivative loss.

\begin{proposition}\label{Prop-St2}
Let $\alpha <0$ and  $S_{\alpha,0}(t)$ be the linear group associated to the IVP \eqref{model} posed on 
$\T^2$. Then, there exists a positive constant $C$, such that 
	\begin{equation}\label{Strichartz-2}
	\|D_{x,y}^{-\frac18-\epsilon}S_{\alpha,0}(t)\phi\|_{L^4(I\times\T^2)}\le C \|\phi\|_{L^2(\T^2)},
	\end{equation}
	for any $\phi \in L^2(\T^2)$.
\end{proposition}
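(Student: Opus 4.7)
The plan is to apply the Bourgain--Demeter $\ell^2$-decoupling theorem (Theorem A above) to the characteristic surface
\[
\Sigma = \{(\xi_1, \xi_2, -\alpha\xi_1^4 + \xi_2^2) : (\xi_1, \xi_2) \in \R^2\} \subset \R^3
\]
of the linear group $S_{\alpha,0}(t)$. The hypothesis $\alpha < 0$ is used precisely so that the Hessian of the phase $(\xi_1, \xi_2) \mapsto -\alpha\xi_1^4 + \xi_2^2$, namely $\mathrm{diag}(-12\alpha\xi_1^2, 2)$, is positive definite whenever $\xi_1 \neq 0$; thus $\Sigma$ has positive definite second fundamental form away from the axis $\xi_1 = 0$, as required by Theorem A.

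I would begin with an anisotropic Littlewood--Paley decomposition $\phi = \sum_{N_1, N_2} \phi_{N_1, N_2}$, with $\phi_{N_1, N_2}$ Fourier-supported on $|n_1| \sim N_1$ and $|n_2| \sim N_2$. By the $L^4$ square-function inequality $\|\sum_{N_1, N_2} u_{N_1, N_2}\|_{L^4}^2 \lesssim \sum_{N_1, N_2} \|u_{N_1, N_2}\|_{L^4}^2$ it suffices to establish, on each dyadic block, a Strichartz-type bound of the form
\[
\| S_{\alpha,0}(t) \phi_{N_1, N_2} \|_{L^4(I \times \T^2)} \lesssim \max(N_1, N_2)^{\frac{1}{8}} \| \phi_{N_1, N_2} \|_{L^2(\T^2)},
\]
which the weight $D_{x,y}^{-\frac18+\epsilon}$ absorbs with a summable $\epsilon$-slack. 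To produce this bound I would exploit the scaling invariance of the free equation under $(x, y, t) \mapsto (\lambda^{-1} X, \lambda^{-2} Y, \lambda^{-4} T)$ (the natural biharmonic-Schr\"odinger scaling), choosing $\lambda = N_1$ when $N_2 \le N_1^2$ and $\lambda = N_2^{1/2}$ in the opposite regime. The rescaled frequencies $(m_1, m_2) = (n_1/\lambda, n_2/\lambda^2)$ then lie on a compact piece of $\Sigma$ of unit diameter, with lattice spacing $\delta^{1/2} \sim \lambda^{-2}$ dictated by the $m_2$-direction. Applying Theorem A with $n = 3$, $p = 4$ (the endpoint $p = \tfrac{2(n+1)}{n-1}$), $\delta = \lambda^{-4}$, and $R = \delta^{-1} = \lambda^4$, the decoupling yields
\[
\Big( \tfrac{1}{|B_R|} \int_{B_R} |v|^4 \Big)^{1/4} \lesssim \lambda^{4\epsilon}\, \|\phi_{N_1,N_2}\|_{L^2},
\]
for the rescaled exponential sum $v(T,X,Y)$. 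Using the periodicity of $v$ in $(X,Y)$ with periods $2\pi\lambda$ and $2\pi\lambda^2$ to average $B_R$ back over a single fundamental domain, and then undoing the change of variables (with Jacobian $\lambda^{-7}$ and $T$-window of length $\sim \lambda^4 |I|$), gives the required dyadic estimate.

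The main obstacle is the degeneracy of the Hessian at $\xi_1 = 0$, which makes the second fundamental form non-uniformly positive definite exactly in the regime $N_2 > N_1^2$ where the rescaled coordinate $m_1$ lies near zero. I expect to overcome this by a further dyadic subdivision in $n_1$ combined with a parabolic-type rescaling adapted to the degenerate direction, together with the careful choice of $\lambda$ above so that on each rescaled piece $\Sigma$ has curvature bounded below by an absolute constant. The $\frac{1}{8}$ derivative loss in the statement is precisely the scale-critical Sobolev exponent compatible with the scaling weights $(1,2,4)$ of $(x,y,t)$ for the equation $i\partial_t u - \alpha\partial_x^4 u + \partial_y^2 u = 0$, and is sharp enough to absorb the worst dyadic loss arising from the degenerate regime.
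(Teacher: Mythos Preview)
Your overall strategy coincides with the paper's: rescale the lattice frequencies onto a bounded piece of the surface $\{(\eta_1,\eta_2,-\alpha\eta_1^4+\eta_2^2)\}$ and invoke Theorem~A with $n=3$, $p=4$. The execution differs. The paper does \emph{not} run a bi-parameter Littlewood--Paley decomposition; it makes a single anisotropic localisation $|\xi_1|\le\sqrt{N/2}$, $|\xi_2|\le N$ and rescales via $\eta_1=\xi_1/\sqrt{2N}$, $\eta_2=\xi_2/(2N)$ --- exactly your choice $\lambda=N_2^{1/2}$ in what you call the regime $N_2>N_1^2$. Your ``Case~A'' (the regime $N_2\le N_1^2$ with $\lambda=N_1$) never appears in the paper; it is absorbed into the single anisotropic block. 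The $N^{1/8}$ loss in the paper does not come from any curvature analysis: it arises purely from a book-keeping step in which the $x'$-integration (period $\sim N^{1/2}$) is enlarged to the full cube of side $\sim N^2$ by a crude inequality rather than by periodicity, producing an extra $N^{1/2}$ inside the $L^p$ norm and hence $N^{1/(2p)}=N^{1/8}$ at $p=4$.

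On the degeneracy at $\xi_1=0$ you are more scrupulous than the paper. The paper's Lemma~5.1 asserts that $\mathbb{S}_\alpha$ has positive definite second fundamental form, but the coefficient $e(v,w)=-12\alpha v^2/\sqrt{\cdots}$ vanishes at $v=0$, so the form is only positive \emph{semi}-definite there; Theorem~A is then applied without further comment. Your proposal flags this honestly as the main obstacle and sketches the natural cure (finer dyadic localisation in $n_1$ plus an adapted rescaling so that on each piece the curvature is bounded below by an absolute constant). That sketch is in the right direction but is not carried to completion in your text: in the regime $N_1\ll N_2^{1/2}$ the rescaled first principal curvature is $\sim(N_1/N_2^{1/2})^2$, and you still need either to quantify how the decoupling constant in Theorem~A depends on this lower bound, or to replace Theorem~A on those pieces by a parabolic-cylinder type argument (decoupling in $\eta_2$ combined with a trivial or Bourgain-type bound in $\eta_1$). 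This is a genuine gap, but it is one the paper's own proof shares rather than one introduced by your approach.
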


Furthermore, by a standard argument (see, for instance, \cite{BGT-05}), Proposition~\ref{Prop-St2} yields the following local well-posedness result.

\begin{theorem}\label{Th-LWP-T}
Let $\alpha <0$ and $\varepsilon =0$. Then the IVP \eqref{model} is locally well-posed for any given data in $H^s(\T^2)$ whenever $s>\frac14$.
\end{theorem}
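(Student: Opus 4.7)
The plan is to establish local well-posedness by running a Picard iteration on the Duhamel formulation of \eqref{model} in the Bourgain spaces $X^{s,b}$ adapted to the linear phase $\omega(n_1,n_2):=-\alpha n_1^4+n_2^2$ on $\Z^2$. Setting
\begin{equation*}
\|u\|_{X^{s,b}}^{2}:=\sum_{(n_1,n_2)\in\Z^2}\int_{\R}\langle(n_1,n_2)\rangle^{2s}\,\langle\tau+\omega(n_1,n_2)\rangle^{2b}\,|\widetilde u(\tau,n_1,n_2)|^{2}\,d\tau,
\end{equation*}
and working in the time-localized version $X^{s,b}_T$, the first step is to transfer the Strichartz bound of Proposition \ref{Prop-St2} to the Bourgain-space setting via the standard argument of decomposing $u$ in modulation and applying Cauchy--Schwarz in $\tau$: for every $b>1/2$ and $\epsilon>0$,
\begin{equation}\label{plan-transf}
\|u\|_{L^{4}_{t,x,y}(I\times\T^{2})}\lesssim \|u\|_{X^{1/8+\epsilon,\,b}}.
\end{equation}

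With \eqref{plan-transf} in hand, together with the familiar homogeneous and inhomogeneous linear estimates
\begin{equation*}
\|\chi(t)S_{\alpha,0}(t)\phi\|_{X^{s,b}}\lesssim\|\phi\|_{H^s},\qquad \Bigl\|\chi(t/T)\!\int_{0}^{t}\! S_{\alpha,0}(t-t')F(t')\,dt'\Bigr\|_{X^{s,b}}\lesssim T^{\theta}\|F\|_{X^{s,\,b-1+\theta}},
\end{equation*}
valid for a smooth cut-off $\chi$, any $b>1/2$ and $\theta\in(0,1-b)$, the theorem reduces to the trilinear estimate
\begin{equation}\label{plan-tri-1}
\bigl\||u|^{2}u\bigr\|_{X^{s,\,b-1+\theta}}\lesssim \|u\|_{X^{s,b}}^{3}
\end{equation}
for some $b>1/2$, $\theta>0$ and every $s>1/8$. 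Indeed, once \eqref{plan-tri-1} is known, the Banach fixed-point theorem applied to
\begin{equation*}
\Psi(u)(t):=\chi(t)S_{\alpha,0}(t)\phi\mp i\chi(t/T)\!\int_0^t S_{\alpha,0}(t-t')\,|u|^{2}u(t')\,dt'
\end{equation*}
on a ball of $X^{s,b}_T$, with $T=T(\|\phi\|_{H^s})$ sufficiently small, yields existence, uniqueness, persistence of regularity and continuous dependence in the standard way.

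To prove \eqref{plan-tri-1} I would first invoke the elementary inequality $\|F\|_{X^{s,-c}}\le\|F\|_{L^{2}_{t}H^{s}_{x,y}}$, valid for any $c\ge 0$, reducing matters to the space-time bound $\||u|^{2}u\|_{L^{2}_{t}H^{s}_{x,y}}\lesssim\|u\|_{X^{s,b}}^{3}$. A fractional Leibniz rule on $\T^{2}$ places the derivative $J^{s}$ on a single factor of the cubic product; distributing the remaining factors by H\"older and applying \eqref{plan-transf} to the factor carrying the derivative, together with the trivial embedding $X^{0,b}\hookrightarrow L^{\infty}_{t}L^{2}_{x,y}$ on the other two (interpolated with \eqref{plan-transf} when an intermediate $L^{p}_{t}L^{q}_{x,y}$ is needed), produces a bound whose net derivative cost on the highest-frequency factor is $s+1/8+\epsilon$. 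Localizing dyadically in frequency and exploiting that the output Fourier mode is controlled by the largest of the three input modes, the surplus $s-1/8-\epsilon>0$ can be absorbed as a negative power of the smallest dyadic scale, giving summable series and closing the estimate.

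The main obstacle is the trilinear estimate \eqref{plan-tri-1}. Since Proposition \ref{Prop-St2} carries a genuine derivative loss of $1/8+\epsilon$, a crude H\"older-with-$L^{4}$ argument applied symmetrically to all three factors would demand $s>3(1/8+\epsilon)$ rather than $s>1/8$; one must therefore concentrate the loss on a single (highest-frequency) factor. Carrying this out via Littlewood--Paley decomposition and an analysis of the resonance structure of $\omega(n_1,n_2)$ is the technical heart of the proof, while all other ingredients --- the transfer principle, the linear $X^{s,b}$ estimates, and the Banach fixed-point step --- are routine once \eqref{plan-tri-1} is known.
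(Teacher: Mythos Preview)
Your overall framework—$X^{s,b}$ spaces, the transfer principle \eqref{plan-transf}, and a contraction argument—is indeed the standard route that the paper implicitly invokes (the paper gives no details at all beyond ``using standard arguments, the proof of Theorem~\ref{Th-LWP-T} is a consequence of Proposition~\ref{Prop-St2}''). So the high-level strategy matches.

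The genuine gap is in your treatment of the trilinear estimate. The reduction $\|F\|_{X^{s,-c}}\le\|F\|_{L^2_tH^s_{x,y}}$ throws away the negative modulation weight, and after that step the estimate simply cannot close at $s>1/8$. Concretely: once you place $J^s$ on the highest-frequency factor $P_{N_1}u$ and put it in $L^4$ via \eqref{plan-transf}, you incur an \emph{unavoidable} loss of $N_1^{1/8+\epsilon}$ on the \emph{largest} dyadic scale. The ``surplus $s-1/8-\epsilon$'' you describe lives on the \emph{lower} frequencies $N_2,N_3$, and since $N_2,N_3$ may equal $1$ while $N_1\to\infty$, no amount of gain at the small scales absorbs $N_1^{1/8+\epsilon}$. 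Whatever H\"older splitting you choose for the remaining two factors ($L^4\!\times\!L^4\!\times\!L^\infty$, $L^6\!\times\!L^6\!\times\!L^6$, etc.) leads to a threshold well above $1/8$.

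What makes the standard argument work is precisely what your reduction discards: one keeps the $X^{s,b-1}$ norm on $|u|^2u$, dualizes against $v\in X^{-s,1-b}$, and estimates the resulting \emph{four}-linear integral $\int P_{N_0}v\,P_{N_1}u\,\overline{P_{N_2}u}\,P_{N_3}u$. The point is that the dual factor carries weight $N_0^{s}$, and since $N_0\sim N_1$ in the dominant interaction, this cancels the $N_1^{-s}$ coming from $\|P_{N_1}u\|_{X^{s,b}}$—a cancellation that is invisible once you pass to $L^2_tH^s$. One then places all four factors in $L^4$; the technical issue that $v$ sits in $X^{-s,1-b}$ with $1-b<1/2$ (so the transfer principle does not apply directly) is handled by interpolating \eqref{plan-transf} with the crude Bernstein bound $\|P_N w\|_{L^4_{t,x}}\lesssim N^{1/2}\|w\|_{X^{0,1/4}}$, yielding an $L^4$ estimate at modulation exponent slightly below $1/2$ with only an $\epsilon$ increase in derivative loss. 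After this, the dyadic sums close for $s>1/8$ (with a bilinear almost-orthogonality step to dispose of the residual high-frequency factor). None of this survives the passage to $L^2_tH^s$, so your proposed route to \eqref{plan-tri-1} has to be replaced by the duality argument.
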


\begin{remark}
The coefficient $\alpha \in \R$ is a modeling parameter. For technical reasons, in certain situations we consider $\alpha < 0$. This assumption mainly arises from the expression \eqref{group-3}, which is related to the phase of the semigroup in \eqref{group-2}. For instance, as mentioned in Remark~\ref{Rem-2.3}, our approach fails for $\alpha > 0$ when deriving the Strichartz estimate, which plays a crucial role in proving Theorem~\ref{Th-WP-L2} (the well-posedness in $L^2$). As can be seen in the proof of the Strichartz estimate, the restriction $\alpha < 0$ is necessary to ensure the existence of real solutions  to the polynomial equation
\[
\varepsilon x^2 - \alpha x^4 = \gamma,
\]
for $\gamma > 0$.
Moreover, the condition $\alpha < 0$ is required in Lemma~\ref{lemma-CS} to guarantee that the Gaussian curvature is positive, which is essential for applying the Decoupling Theorem in our setting. It would be very interesting if one could remove this restriction on $\alpha$.

From a physical perspective, the condition $\alpha < 0$ appears naturally as part of the nonparaxial correction to the NLS equation (see Section~2.1 in \cite{FIP}). Furthermore, as noted in \cite{FIS}, under the condition $\alpha < 0$, the second- and fourth-order dispersion terms act together, and one can also employ an anisotropic Gagliardo–Nirenberg inequality to obtain an {\em a priori} estimate of the solution for asymptotic analysis. See also \cite{GC2008} for similar considerations. From this point of view, the restriction we impose appears to be physically natural.
\end{remark}

\subsection{Structure of the paper} The paper is organized as follows. Section \ref{Section-TxR} is devoted to the proof of Strichartz estimates in Proposition \ref{Th-Strichartz}  and the ill-posedness results in Theorem \ref{Th-IP-below-L2}. In section \ref{Section-RxT} we develop a theory on domain $\R\times \T$, that lead to the proof of  Theorem \ref{Th-LWP-aniso}.  In section \ref{Section-TxT}, the decoupling theory developed by Bourgain-Demeter \cite{BD2015} is used to prove the Theorem 
\ref{Th-LWP-T}. Finally, Section \ref{sec-6} is devoted to present some concluding remarks.

\setcounter{equation}{0}
\section{Proof of results for cylindrical domain  $\T\times \R$}\label{sec-3}\label{Section-TxR}

In this section we consider the IVP \eqref{model} posed on the cylindrical domain  $\T\times \R$ and  provide proofs of the Strichartz estimates stated in Proposition \ref{Th-Strichartz} that would lead to the proof of the global well-posedness result for small data and the ill-posedness result stated in Theorem \ref{Th-IP-below-L2}. 

\subsection{Proof of Proposition \ref{Th-Strichartz}}
As in \cite{TT2001}, to get the Strichartz estimates in Proposition \ref{Th-Strichartz} it suffices to prove the following bilinear estimate. For the sake of clarity in exposition, we provide details considering $\varepsilon =1$. All the calculations for $\varepsilon =1$ also hold for case $\varepsilon =0$, the latter case being much simpler.
\begin{lemma}\label{Lem-bilinear-estimate}
	Let $\alpha<0$  be a fixed real number and consider  $u_1, u_2$ 
	 two functions defined on $\R \times \T \times \R$ with the following property 
	\[
	\text{supp}\,(\widehat{u_j}) \subset  \mathcal{E}_{j} := \big\{ (\tau, n, \xi) : \tfrac{1}{2} K_{j} \leq |\tau + \omega_{\alpha, 1}(n) + \xi^2| \leq 2 K_{j} \big\} 
	\]
	for $K_{j}$ ($j=1,2$). Then the following estimate holds
	\begin{equation}
		\|u_1 u_2\|_{L^2_{t,x,y}} \lesssim (K_1K_2)^{\frac12}\| u_1 \|_{L^2_{t,x,y}} \|u_2\|_{L^2_{t,x,y}}.\label{Lem-bilinear-estimate-a}
	\end{equation}
\end{lemma}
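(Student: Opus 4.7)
The plan is to reduce \eqref{Lem-bilinear-estimate-a} to a measure estimate on a ``resonance set'' in Fourier space, and then estimate that set by a change of variables into modulation coordinates. By Plancherel,
\[
\|u_1 u_2\|_{L^2}^2 = \|\widehat{u}_1 \ast \widehat{u}_2\|_{L^2}^2,
\]
where the convolution is taken on $\R_\tau\times\Z_n\times\R_\xi$. Applying the Cauchy--Schwarz inequality pointwise to the convolution gives
\[
|(\widehat{u}_1 \ast \widehat{u}_2)(\tau,n,\xi)|^2 \le |E(\tau,n,\xi)|\,(|\widehat{u}_1|^2 \ast |\widehat{u}_2|^2)(\tau,n,\xi),
\]
where
\[
E(\tau,n,\xi) := \{(\tau_1,n_1,\xi_1)\in\R\times\Z\times\R : (\tau_1,n_1,\xi_1)\in\mathcal{E}_1,\ (\tau-\tau_1,n-n_1,\xi-\xi_1)\in\mathcal{E}_2\}.
\]
Integrating in $(\tau,n,\xi)$ and using Plancherel once more reduces the lemma to establishing
\[
\sup_{(\tau,n,\xi)} |E(\tau,n,\xi)| \lesssim (K_1 K_2)^2.
\]

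To estimate $|E|$, I freeze $n_1\in\Z$ and work on the slice in the variables $(\tau_1,\xi_1)$. Changing variables to the modulations
\[
\mu_1 = \tau_1 + \omega_{\alpha,1}(n_1) + \xi_1^2,\qquad \mu_2 = \tau - \tau_1 + \omega_{\alpha,1}(n-n_1) + (\xi-\xi_1)^2,
\]
a direct computation gives the Jacobian determinant $|J| = 4|\xi_1 - \xi/2|$. Adding the two defining identities yields the pivotal relation
\[
2(\xi_1 - \xi/2)^2 = (\mu_1 + \mu_2) - \tau - \tfrac{1}{2}\xi^2 - g(n_1), \qquad g(n_1) := \omega_{\alpha,1}(n_1) + \omega_{\alpha,1}(n-n_1),
\]
which makes $|J|$ an explicit function of $(\mu_1,\mu_2,n_1)$. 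Integrating $1/|J|$ over the image $\{|\mu_j| \sim K_j\}$, and performing the inner integration via the substitution $a = \mu_1 + \mu_2$ (whose fibers in $(\mu_1,\mu_2)$ have length at most $O(\min(K_1,K_2))$), produces a slice bound whose size depends on the location of $C(n_1) := \tau + \xi^2/2 + g(n_1)$ relative to the window $[-2(K_1+K_2),\,2(K_1+K_2)]$.

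Summing the slice estimates over $n_1\in\Z$ is where the hypothesis $\alpha<0$ enters in an essential way: it makes $\omega_{\alpha,1}(k) = k^2 + |\alpha|k^4$ coercive, so $g(n_1)\ge 0$ and $g(n_1) \gtrsim |\alpha|\,|n_1 - n/2|^4$ for large $|n_1 - n/2|$. Consequently only $O\bigl((K_1+K_2)^{1/4}|\alpha|^{-1/4}\bigr)$ values of $n_1$ give a nonempty slice, and the quartic lacunarity of the level sets of $g$ controls the sum of the singular contributions, yielding $\sup|E|\lesssim (K_1K_2)^2$ and hence \eqref{Lem-bilinear-estimate-a}. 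The main obstacle I expect is precisely this interplay between the Jacobian singularity at $\xi_1 = \xi/2$ (integrable but unbounded on each slice) and the discrete sum over $n_1$; the quartic coercivity provided by $\alpha<0$ is what renders it manageable. When $\alpha\ge 0$ the fourth-order coefficient has the wrong sign, $g$ ceases to be coercive, and this step fails, consistent with the remark following Theorem~\ref{Th-WP-L2}.
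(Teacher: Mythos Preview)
Your reduction via Cauchy--Schwarz and Plancherel to the measure bound $\sup_{(\tau,n,\xi)}|E(\tau,n,\xi)|\lesssim (K_1K_2)^2$ is exactly the paper's opening move, and your change to modulation coordinates $(\mu_1,\mu_2)$ with Jacobian $4|\xi_1-\xi/2|$ is a legitimate reorganisation of the paper's step of first integrating out $\tau_1$ (gaining a factor $\min(K_1,K_2)$) and then measuring the projection $\mathcal{B}_{\tau,n,\xi}$ in $(n_1,\xi_1)$. The two schemes produce identical per-slice bounds.

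The genuine gap is in your summation over $n_1$. You assert that only $O\bigl((K_1+K_2)^{1/4}|\alpha|^{-1/4}\bigr)$ values of $n_1$ give a nonempty slice, but this is false: a nonempty slice merely requires $C(n_1)=\tau+\tfrac{\xi^2}{2}+g(n_1)\le 2(K_1+K_2)$, and since $\tau+\tfrac{\xi^2}{2}$ can be an arbitrarily large \emph{negative} number $-M$, the condition becomes $g(n_1)\le M+2(K_1+K_2)$, which has $O(M^{1/4})$ integer solutions. The coercivity of $g$ that you correctly attribute to $\alpha<0$ does \emph{not} bound the number of contributing $n_1$ uniformly in $(\tau,n,\xi)$. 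What must actually be shown is that the \emph{sum} of the slice contributions is controlled: for $C(n_1)<-2(K_1+K_2)$ your per-slice bound scales like $\min(K_1,K_2)(K_1+K_2)/\sqrt{M-g(n_1)}$, so the issue reduces to
\[
\sum_{n_1:\,g(n_1)<M}\frac{1}{\sqrt{M-g(n_1)}}\ =\ O(1)\quad\text{uniformly in }M\text{ and in the shift }n.
\]
This is precisely the content of the paper's Lemma~\ref{crucial-lem}, and its proof is not a formality: one compares the sum to the integral $\int_0^{x^*(M)}(M-\omega(z))^{-1/2}\,dz$ and then checks, via an explicit substitution exploiting the biquadratic structure of $\omega$, that this integral is bounded independently of $M$ (it reduces to a convergent Beta-type integral). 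Your phrase ``quartic lacunarity controls the sum of the singular contributions'' points at the right mechanism but does not supply the argument; without it the proof is incomplete.
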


\begin{proof}
We set 
\begin{equation}\label{omega-first-condition}
\omega(n):= \omega_{\alpha,1}(n)= n^2 -\alpha n^{4},\quad \alpha<0.
\end{equation}

From  Cauchy-Schwarz inequality and Plancherel's identity  it is follows that 
\begin{equation}\label{proof-Lem-bilinear-estimate-a-01}
\|u_1 u_2\|_{L^2_{t,x,y}} \lesssim \Big(\sup\limits_{\tau, n , \xi} \frak{m}(\cA_{\tau, n , \xi})\Big)^{1/2}\|u_1\|_{L^2_{t,x,y}} \|u_2\|_{L^2_{t,x,y}},
\end{equation}
where
\[
\cA_{\tau, n, \xi} := \Big\{ (\tau_1, n_1, \xi_1): \tfrac{1}{2}K_1 \leq |\tau_1 + \omega(n_1) + \xi_1^2| \leq 2 K_1,\; \tfrac{1}{2}K_2 \leq |\tau - \tau_1  + \omega(n - n_1) + (\xi - \xi_1)^2| \leq 2K_2 \Big\}. 
\]
Hence, to get \eqref{Lem-bilinear-estimate-a} it is enough to estimate the mea\-sure of the set $\cA_{\tau, \xi, n}$.

Notice that 
\begin{enumerate}
\item[$\bold{(i)}$] If $(\tau_1, n_1, \xi_1) \in \cA_{\tau, n, \xi}$, then  
$\tau_1 \in J_1 \cap J_2$ (both $J_1$ and $J_2$ are intervals) such that 
$$|J_1 \cap J_2| \leq 4 (K_1\! \vee \! K_2).$$

\item[$\bold{(ii)}$] If  $(\tau_1, n_1, \xi_1) \in \cA_{\tau, n, \xi}$, eliminating $\tau_1$ (via triangular inequality) we obtain 
\begin{equation*}
\begin{split}
\big|(\xi_1 - \tfrac{\xi}{2})^2 & + \tfrac{1}{2} \omega(n_1) + \tfrac{1}{2}\omega(n - n_1) + \tfrac{\xi^2}{4} + \tfrac{\tau}{2} \big|=\\
&=\tfrac{1}{2} |\tau_1 + \omega(n_1) + \xi_1^2 + \tau - \tau_1 + \omega(n - n_1) + (\xi - \xi_1)^2| \leq K_1 + K_2. 
\end{split}
\end{equation*}

\item[$\bold{(iii)}$]   From $\bold{(ii)}$, if $(\tau_1, n_1, \xi_1) \in \cA_{\tau, n, \xi}$ then we have $(n_1, \xi_1) \in \cB_{\tau, n, \xi}$, where 
\[
\cB_{\tau, n, \xi}:= \big\{ (n_1, \xi_1) : \big|(\xi_1 - \tfrac{\xi}{2})^2  + \tfrac{1}{2} \omega(n_1) + \tfrac{1}{2} \omega(n - n_1) + \tfrac{\xi^2}{4} + \tfrac{\tau}{2} \big| \leq K_1 + K_2 \big\}. 
\]
\end{enumerate}
Hence, from $\bold{(i)}$ and $ \bold{(iii)}$, it is follows that 
\begin{equation}\label{proof-Lem-bilinear-estimate-a-02}
 \frak{m}(A_{\tau, \xi, n}) \lesssim (K_1\! \vee \! K_2) \frak{m}(B_{\tau, n, \xi}).
\end{equation}
In what follows we will prove that 
\begin{equation}\label{proof-Lem-bilinear-estimate-a-03}
\frak{m}(B_{\tau, n, \xi})\lesssim K_1\wedge K_2,
\end{equation}
which combined with \eqref{proof-Lem-bilinear-estimate-a-02} and \eqref{proof-Lem-bilinear-estimate-a-01} gives us the claimed estimate \eqref{Lem-bilinear-estimate-a}. 
\end{proof}

Finally, to prove \eqref{proof-Lem-bilinear-estimate-a-03}  it is sufficient to prove the following crucial lemma. 
\begin{lemma}\label{crucial-lem}
    Let $C \geq 0$ and $K \geq 1$ be constants, and 
    \[
    \mathcal{G}_{K} :=  \big\{ (n, \xi): C \leq \xi^2 + \tfrac{1}{2} \omega(n) + \tfrac{1}{2} \omega(n - n_0) \leq C + K \big\}, 
    \]
    with $n_0\in \N$. Then
    \[
    \mathfrak{m}( \mathcal{G}_{K}) \lesssim K,
    \]
    independently of $C$ and $n_0 \in \N$. 
\end{lemma}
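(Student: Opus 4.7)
The plan is to rewrite
\[
\mathfrak{m}(\mathcal{G}_{K})=\sum_{n\in\Z}\bigl|\{\xi\in\R: F(n)+\xi^{2}\in[C,C+K]\}\bigr|,
\]
where $F(n):=\tfrac{1}{2}\omega(n)+\tfrac{1}{2}\omega(n-n_{0})$, and to estimate the inner $\xi$-Lebesgue measure for each fixed $n$. A direct computation gives
\[
\bigl|\{\xi: F(n)+\xi^{2}\in[C,C+K]\}\bigr|=2\sqrt{(C+K-F(n))_{+}}-2\sqrt{(C-F(n))_{+}},
\]
which is bounded uniformly by $2\sqrt{K}$, and also by $K/\sqrt{C-F(n)}$ whenever $F(n)\le C-K$. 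The strategy is to split the outer sum into a \emph{shell} part over $\{n: |F(n)-C|\le K\}$ (using the uniform bound) and a \emph{tail} part over $\{n: F(n)\le C-K\}$ (using the reciprocal bound), and to show that each contributes $\lesssim K$.

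Setting $\beta:=-\alpha>0$ so that $\omega(n)=n^{2}+\beta n^{4}$, the function $F$ is strictly convex on $\R$ with minimum at $n_{0}/2$ and $F''(n)=2+6\beta(n^{2}+(n-n_{0})^{2})\ge 2$. The technical crux will be the pointwise inequality
\[
|F'(n)|\ge 2\sqrt{F(n)-F_{\min}},\qquad n\in\R,
\]
which I plan to prove by considering $\psi(n):=F'(n)^{2}-4(F(n)-F_{\min})$: it vanishes at $n_{0}/2$, and since $\psi'(n)=2F'(n)(F''(n)-2)$ has the same sign as $F'(n)$ (because $F''\ge 2$), $\psi$ decreases down to $n_{0}/2$ from the left and increases from the right, whence $\psi\ge 0$ everywhere.

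Armed with this inequality, the substitution $s=F(n)-F_{\min}$, performed separately on each monotone branch $\{n\ge n_{0}/2\}$ and $\{n\le n_{0}/2\}$, reduces everything to standard Beta-type integrals. For the tail, monotonicity of $n\mapsto 1/\sqrt{T-s(n)}$ on each branch (with $T:=C-F_{\min}$) combined with a sum-versus-integral comparison yields
\[
\sum_{n:\,F(n)\le C-K}\frac{1}{\sqrt{C-F(n)}}\lesssim \frac{1}{\sqrt{K}}+\int_{0}^{T}\frac{ds}{2\sqrt{s(T-s)}}=\frac{1}{\sqrt{K}}+\frac{\pi}{2}\lesssim 1,
\]
so that this piece contributes $\lesssim K$. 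For the shell, the same substitution bounds the Lebesgue length of $\{n\in\R: F(n)\in[(C-K)_{+},C+K]\}$ by $\sqrt{T+K}-\sqrt{(T-K)_{+}}\lesssim\sqrt{K}$; hence at most $\lesssim\sqrt{K}$ integers contribute, and multiplying by the per-integer bound $2\sqrt{K}$ gives again $\lesssim K$. The main obstacle is establishing $|F'(n)|\ge 2\sqrt{F(n)-F_{\min}}$, which is exactly what makes every subsequent estimate independent of $C$ and $n_{0}$; once that bound is in hand, the rest is bookkeeping.
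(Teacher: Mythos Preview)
Your argument is correct, and it follows a genuinely different route from the paper. Both proofs share the same high-level split into a ``tail'' piece (handled via the bound $K/\sqrt{C-F(n)}$ and a sum-to-integral comparison) and a ``shell'' piece (handled via the uniform bound $2\sqrt{K}$ times a count of $\lesssim\sqrt{K}$ integers). The difference lies in how the dependence on $n_0$ is absorbed. The paper first replaces $F(n)=\tfrac12\omega(n)+\tfrac12\omega(n-n_0)$ by a single $\omega(\cdot)$ through the decomposition $\mathcal G_K\subset\mathcal G_{K,1}\cup\mathcal G_{K,2}$ (according to whether $|n-n_0|\le |n|$ or not), and then carries out all estimates with the explicit biquadratic root $x^*(\gamma)=(-2\alpha)^{-1/2}(\sqrt{1-4\alpha\gamma}-1)^{1/2}$: the tail integral is computed via a direct change of variable giving a Beta-type integral, and the shell count comes from an explicit bound on $x^*(C+K)-x^*(C)$. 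You instead keep $F$ intact and extract everything from the single pointwise inequality $|F'(n)|\ge 2\sqrt{F(n)-F_{\min}}$, which you obtain from the convexity bound $F''\ge 2$ via the monotonicity of $\psi=F'^2-4(F-F_{\min})$. This inequality is precisely what makes the substitution $s=F(n)-F_{\min}$ behave no worse than for a pure quadratic, yielding $dn\le ds/(2\sqrt{s})$ on each monotone branch; all subsequent bounds then reduce to the elementary integrals $\int_0^T ds/\sqrt{s(T-s)}$ and $\int ds/\sqrt{s}$. Your approach is more conceptual and more portable---it would apply verbatim to any symbol $\omega$ with $\omega''\ge 1$ (in particular to the sixth-order model mentioned in the concluding remarks), whereas the paper's computation is tied to the explicit solvability of $\omega(x)=\gamma$. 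The paper's approach, on the other hand, is more concrete and makes the numerical constants visible.
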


\begin{proof} Since $\omega$ is an increasing even function we have that 
	\begin{equation}\label{proof-crucial-lem-00}
	\mathcal{G}_{K} \subset \mathcal{G}_{K,1} \cup \cG_{K,2},
	\end{equation}
	with 
	\begin{enumerate}
		\item[] $ \cG_{K,1} := \{ (\xi, n) : C- \omega(n)  \le \xi^2 \leq C + K -\omega(n-n_0) \},$

		\item[] $ \cG_{K,2}:= \{ (\xi, n) :C- \omega(n-n_0)  \le \xi^2 \leq C + K -\omega(n) \}.$
	\end{enumerate}

To justify \eqref{proof-crucial-lem-00} we observe that, since $\omega$ in \eqref{omega-first-condition} is an increasing function, $\cG_{K,1}$ 
contains the points of  $\cG_K$  with $|n-n_0|\le |n|$. Indeed, in that case we have 
$$\omega(n-n_0) \le  \tfrac{1}{2} \omega(n) + \tfrac{1}{2} \omega(n - n_0) \le \omega(n).$$
Therefore, 
$$C \leq \xi^2 + \tfrac{1}{2} \omega(n) + \tfrac{1}{2} \omega(n - n_0) \leq C + K$$ implies that 
$$C- \omega(n)  \le \xi^2 \leq C + K -\omega(n-n_0).$$
Similar argument shows that $\cG_{K,2}$ contains the points of  $\cG_K$ such that  $|n-n_0| >|n|$.
   
Before estimating the measures of the sets  $\mathcal{G}_{K,i} (i=1,2)$ we observe that the solution of the polynomial inequality $\omega(x):= x^2 -\alpha x^4 \le \gamma$, with $\alpha < 0$ and $\gamma>0$, 
is given by 
\begin{equation}\label{biquadratic-sol}
|x| \le x^*(\gamma):=\frac{1}{\sqrt{-2\alpha}} \sqrt{ \sqrt{ 1 - 4\alpha\gamma} - 1},
\end{equation}
with $\omega(x^*(\gamma))=\gamma$. In view of this fact we have that
\begin{equation}\label{proof-crucial-lem-01}
\begin{split}
\frak{m}(\cG_{K,1}) &= 2\sum_{|n -n_0|=0}^{\lfloor x^*(C+K)\rfloor}\sqrt{C+K - \omega(n-n_0)} - 2\sum_{|n|=0}^{\lfloor x^*(C)\rfloor}\sqrt{C- \omega(n)}\\
&=2\sum_{|n|=0}^{\lfloor x^*(C+K)\rfloor}\sqrt{C+K - \omega(n)} - 2\sum_{|n|=0}^{\lfloor x^*(C)\rfloor}\sqrt{C- \omega(n)}\\
&= :2\big( \Sigma_1(C,K) +  \Sigma_2(C,K) \big),
\end{split}
\end{equation}
where
\begin{align}
&\Sigma_1(C,K) := \sum_{|n|=0}^{\lfloor x^*(C)\rfloor}\Big(\sqrt{C+K - \omega(n)}  -\sqrt{C- \omega(n)}\Big)\\
&\Sigma_2(C,K) := \sum_{|n|=\lfloor x^*(C)\rfloor + 1}^{\lfloor x^*(C+K)\rfloor}\sqrt{C+K - \omega(n)}.	
\end{align}

Now, we proceed to estimate the sums $\Sigma_i(C,K),\, i=1,2$.  Bearing in mind \eqref{biquadratic-sol} one gets 
\begin{equation}\label{proof-crucial-lem-02}
\begin{split}
\Sigma_1(C, K)&=\sum_{|n|=0}^{\lfloor x^*(C) \rfloor}\frac{K} {\sqrt{C+K - \omega(n)} + \sqrt{C- \omega(n)}}\\
&\le\sum_{|n|=0}^{\lfloor x^*(C) \rfloor -1}\frac{K} {\sqrt{C- \omega(n)}} + \frac{2K}{\sqrt{C+K - \omega(\lfloor x^*(C) \rfloor)}}\\
&\le 2\sum_{n=0}^{\lfloor x^*(C) \rfloor -1}\frac{K} {\sqrt{C- \omega(n)}} + 2\sqrt{K}\\
&\le 2K\underbrace{\int_0^{x^*(C)}\!\!\!\!\frac{dz}{\sqrt{C- \omega(z)}}}_{\cJ(C)} + 2\sqrt{K}
\end{split}
\end{equation}

Further, making the change of variable $z= \frac{1}{\sqrt{-2\alpha}} \big(\sqrt{ 1 - 4\alpha C\rho} - 1)^{1/2}$ and using again \eqref{biquadratic-sol} we estimate $\cJ(C)$ as follows:
\begin{equation}\label{proof-crucial-lem-03}
\begin{split}
\cJ(C)&=\sqrt{\frac{-\alpha C}{2}}\int_0^1\frac{d\rho}{\sqrt{1-\rho}\,\big(\sqrt{1-4\alpha C\rho} -1\big)^{1/2}\sqrt{1-4\alpha C\rho}}\\
&=\sqrt{\frac{-\alpha C}{2}}\int_0^1\frac{\big(\sqrt{1-4\alpha C\rho} +1\big)^{1/2}}{\sqrt{1-\rho}\,\sqrt{-4\alpha C\rho}\, \sqrt{1-4\alpha C\rho}}d\rho\\
&\lesssim \int_0^1\frac{d\rho}{\sqrt{1-\rho}\,\sqrt{\rho}\, \big(1-4\alpha C\rho\big)^{1/4}}\\
&\lesssim \int_0^1\frac{d\rho}{\sqrt{1-\rho}\,\sqrt{\rho}}=\pi.
\end{split}
\end{equation}

Hence, combining \eqref{proof-crucial-lem-03} and \eqref{proof-crucial-lem-02} we get 
\begin{equation}\label{proof-crucial-lem-04}
\Sigma_1(C, K) \lesssim K.
\end{equation}

On the other hand,
\begin{equation}\label{proof-crucial-lem-05}
\begin{split}
\Sigma_2(C,K) &= 2\sum_{n=\lfloor x^*(C)\rfloor + 1}^{\lfloor x^*(C+K)\rfloor}\sqrt{C+K - \omega(n)}\\
&\le 2\sqrt{C+K - \omega(\lfloor x^*(C)\rfloor + 1)} + 2\int_{\lfloor x^*(C)\rfloor + 1}^{\lfloor x^*(C+K)\rfloor}\sqrt{C+K - \omega(z)}dz\\
&\le 2\sqrt{K} + 2\int_{x^*(C)}^{x^*(C+K)}\sqrt{C+K - \omega(z)}dz\\
&\le 2\sqrt{K} + 2\sqrt{K}\big(x^*(C+K) -  x^*(C)\big),
\end{split}
\end{equation}
and 
\begin{equation}\label{proof-crucial-lem-06}
\begin{split}
x^*(C+K) -  x^*(C)&= \frac{1}{\sqrt{-2\alpha}} \Big( \big(\sqrt{1 - 4\alpha(C+K)} - 1)^{1/2} - \big(\sqrt{1 - 4\alpha C} - 1)^{1/2}\Big)\\
&= \frac{2}{\sqrt{-2\alpha}}\left(\frac{\sqrt{-\alpha(C+K)}}{\big(\sqrt{1 - 4\alpha(C+K)} + 1\big)^{1/2}} - \frac{\sqrt{-\alpha C}}{(\sqrt{1 - 4\alpha C} + 1)^{1/2}}\right)\\
&\le\sqrt{\frac{2}{-\alpha}}\frac{\sqrt{-\alpha(C+K)} - \sqrt{-\alpha C}}{\big(\sqrt{1 - 4\alpha(C+K)} + 1\big)^{1/2}}\\
&=\sqrt{\frac{2}{-\alpha}}\frac{-\alpha K}{\big(\sqrt{1 - 4\alpha(C+K)} + 1\big)^{1/2} \big(\sqrt{-\alpha(C+K)} + \sqrt{-\alpha C}\big)}\\
&\le\sqrt{2K}. 
\end{split}
\end{equation}

Finally, combining \eqref{proof-crucial-lem-06} in  \eqref{proof-crucial-lem-05} we have 
\begin{equation}\label{proof-crucial-lem-07}
	\Sigma_2(C, K) \lesssim K.
\end{equation}
Thus, using the estimates \eqref{proof-crucial-lem-04} and   \eqref{proof-crucial-lem-07} in \eqref{proof-crucial-lem-01} we complete the proof of Lemma \ref{crucial-lem}.
\end{proof}

\subsection{Proof of Theorem \ref{Th-IP-below-L2}}  For the sake of clarity we provide details of the proof for $\varepsilon =1$. The case $\varepsilon =0$ follows in a similar way. We begin by obtaining explicit standing-wave solutions of \eqref{model}  in the focusing regime ($-|u|^2u$). Indeed, if we look for solutions in the form 
\begin{equation}\label{standing-wave}
	u(t,x,y)= e^{i\theta t}e^{in x}\varphi(y),
\end{equation}
where $\theta \in \R$, $n\in \Z$  and $\varphi: \R \to \R$ is a smooth localized function we obtain the following nonlinear ODE
\begin{equation}\label{sw-profile-ode}
	-\varphi''(y) + (n^2-\alpha n^4  +\theta)\varphi(y)  -\varphi^3(y)=0,
\end{equation}
satisfied by $\varphi$.

Recall that, we have considered $\alpha<0$. With this consideration, one has
\begin{equation}\label{sw-profile-ode-cond}
	\sigma_{n,\theta}:=n^2-\alpha n^4+ \theta >0.
\end{equation}

 It is easy to verify that the function 
\begin{equation}\label{sw-profile-ode-sol}
	\varphi(y)= \sqrt{2\sigma_{n,\theta}}\sech\big(\sqrt{\sigma_{n,\theta}}\, y\big)
\end{equation}
is a solution of \eqref{sw-profile-ode}. Hence, 
\begin{equation}\label{sw-sol}
	u_{n,\theta}(t,x,y)=\sqrt{2\sigma_{n,\theta}} e^{i\theta t}e^{in x}\sech\big(\sqrt{\sigma_{n,\theta}}\, y\big)
\end{equation}
under the constraint \eqref{sw-profile-ode-cond}.

We will prove that the  family of solutions \eqref{sw-sol} is not uniformly continuous from  $H^s(\T\times\R)$ into  the space $C\big([0, T];\, H^s(\T\times\R)\big)$ whenever $-\frac12\leq s<0$. 
	
	Consider a sequence $\gamma_n>0$ such that $\gamma_n\to\gamma>0$. For $\gamma_n>0$ with this property,  define
	\begin{equation}\label{not-2}
		\begin{cases}
			\theta_{{\gamma_n},n}:=\gamma_n^2n^{-4s}-n^2+\alpha n^4,\medskip\\
			\theta_{{\gamma},n}:=\gamma^2n^{-4s}-n^2+\alpha n^4.
		\end{cases}
	\end{equation} 
	
Taking in consideration the sequences  $\theta_{{\gamma_n},n}$ and $\theta_{{\gamma},n}$ defined in \eqref{not-2}, in view of  \eqref{sw-sol}, one obtains the following special solutions to the IVP \eqref{model}
	\begin{equation}\label{spl-sol2}
		\begin{cases}
			u_{{\gamma_n},n} = \sqrt{2} e^{i\theta_{{\gamma_n},n}t}e^{inx}\gamma_nn^{-2s}\sech(\gamma_nn^{-2s}y),\medskip\\
			u_{{\gamma},n} = \sqrt{2} e^{i\theta_{{\gamma},n}t}e^{inx}\gamma\, n^{-2s}\sech(\gamma\, n^{-2s}y).
		\end{cases}
	\end{equation}
	Observe that, the Fourier transforms of the special solutions $u_{{\gamma_n},n}$ and $u_{{\gamma},n}$ constructed in \eqref{spl-sol2} are given by
	\begin{equation}\label{F-T1}
		\widehat{u}_{{\gamma_n},n}(k,\xi) = \begin{cases}0\qquad& \text{if}\; k\ne n,\medskip \\
			\sqrt{2}\;\widehat{\sech}\Big(\frac{\xi}{\gamma_n n^{-2s}}\Big) & \text{if}\; k= n,
		\end{cases}
	\end{equation}
	and
	\begin{equation}\label{F-T2}
		\widehat{u}_{{\gamma},n}(k,\xi) = \begin{cases}0\qquad& \text{if} \; k\ne n,\medskip\\
			\sqrt{2}\;\widehat{\sech}\Big(\frac{\xi}{\gamma n^{-2s}}\Big)& \text{if} \; k= n.
		\end{cases}
	\end{equation}

	In what follows, we calculate the $H^s(\T\times \R)$-norm  of  $u_{{\gamma_n},n}$. Using \eqref{F-T1}, one easily obtains
	\begin{equation}\label{Hs-norm1}
		\begin{split}
			\|u_{{\gamma_n},n} \|_{H^s}^2&=\sum_{k\in\Z}\int_{\R}(1+|\xi|+|k|)^{2s}|\widehat{u_{{\gamma_n},n}}(k,\xi) |^2d\xi\\
			&=2\int_{\R}(1+|\xi|+n)^{2s}\Big|\widehat{\sech}\Big(\frac{\xi}{\gamma_n n^{-2s}}\Big)\Big|^2d\xi\\
			&=2\alpha_n\int_{\R}\Big(\frac1n+\frac{\gamma_n}{n^{1+2s}}|\eta|+1\Big)^{2s}|\widehat{\sech}(\eta)|^2d\eta.
		\end{split}
	\end{equation}
	
	Now, from \eqref{Hs-norm1} one can easily infer that 
	\begin{equation}
		\label{Hs-norm2}
		\|u_{{\gamma_n},n} \|_{H^s}\leq C_{\gamma}\|\sech(\cdot)\|_{L^2},
	\end{equation}
for all $n\in\N$ and $s<0$,

	Also, using \eqref{F-T1} and \eqref{F-T2}, we obtain
	\begin{equation}
		\label{Hs-conv1}
		\begin{split}
			\|u_{{\gamma_n},n}(0)-u_{{\gamma},n}(0) \|_{H^s}^2&= 2\int_{\R}(1+|\xi|+n)^{2s}\Big|\widehat{\sech}\Big(\frac{\xi}{\gamma_n n^{-2s}}\Big)- \widehat{\sech}\Big(\frac{\xi}{\gamma \,n^{-2s}}\Big)\Big|^2d\xi\\
			&=2\gamma \int_{\R}\Big(\frac1n+\frac{\gamma_n}{n^{1+2s}}|\eta|+1\Big)^{2s}|\widehat{\sech}\big(\frac{\gamma}{\gamma_n}\eta\big) - \widehat{\sech}(\eta)|^2d\eta\\
			&\leq 2\gamma\int_{\R}|\widehat{\sech}\big(\frac{\gamma}{\gamma_n}\eta\big) - \widehat{\sech}(\eta)|^2d\eta,
		\end{split}
	\end{equation}
	where $s<0$ has been used in the last step.  
	
	Hence, using the Dominated Convergence Theorem,  for $s<0$, we can conclude from \eqref{Hs-conv1} that
	\begin{equation}
		\label{Hs-conv2}
		\lim\limits_{n\to +\infty}\|u_{{\gamma_n},n}(0)-u_{{\gamma},n}(0)\|_{H^s(\R\times\T)}=0.
	\end{equation}
	
	Now, we move on to estimate from below  the $H^s$-norm of the difference of the respective evolutions $u_{{\gamma_n},n}(t)$  and $u_{{\gamma},n}(t)$ of $u_{{\gamma_n},n}(0)$ and $u_{{\gamma},n}(0)$.
	
	First, note that
	\begin{equation}
		\begin{split}
			\label{Hs-apart}
			\|u_{{\gamma_n},n}(t)-u_{{\gamma},n}(t) \|_{H^s}&\geq \|u_{{\gamma_n},n}(t)-W_{\gamma,{\gamma_n}}(t)\|_{H^s}-\|W_{\gamma,{\gamma_n}}(t)-u_{{\gamma},n}(t) \|_{H^s}\\
			&=:A(n,t)-B(n,t),
		\end{split}
	\end{equation}
	where 
	\begin{equation}\label{Un}
		W_{\gamma,{\gamma_n}}(x,y, t):= \sqrt{2}e^{i\theta_{\gamma,n}t}e^{inx}2\gamma_nn^{-2s}\sech(2\gamma_nn^{-2s}y).
	\end{equation}

	In sequel, we estimate the terms $A(n,t)$ and $B(n,t)$ appearing in the RHS of \eqref{Hs-apart}.
	
	 Note that, the estimate for the second term $B(n,t)$ satisfies the following property
	\begin{equation}
		\label{Hs-2nd}
		\begin{split}
			B(n,t)&=\|W_{\gamma,{\gamma_n}}(t)-u_{{\gamma},n}(t) \|_{H^s}\\
			&=\|e^{i\theta_{\gamma,n}t}\Big(u_{{\gamma_n},n}(0)-u_{{\gamma},n}(0)\Big)\|_{H^s}
			=\|u_{{\gamma_n},n}(0)-u_{{\gamma},n}(0)\|_{H^s}\leq \nu_n,
		\end{split}
	\end{equation}
	where $\nu_n\to 0$ as $n\to +\infty$ by \eqref{Hs-conv2}. 
	
On the other hand,  the first term $A(n,t)$, for  $t>0$ fixed, enjoys the following lower bound
	\begin{equation}\label{Hs-LB}
		\begin{split}
			A(n,t)&=\|u_{{\gamma_n},n}(t)-W_{\gamma,{\gamma_n}}(t)\|_{H^s}\\
			&=\Big(\sqrt{2}\int_{\R}(1+|\xi|+n)^{2s}\Big|e^{i\theta_{\gamma_n, n}t}-e^{i\theta_{\gamma, n}t}\Big|^2\Big|\widehat{\sech}\Big(\frac{\xi}{\gamma_nn^{-2s}}\Big)\Big|^2\Big)^{\frac12}\\
			&=2^{\frac14}\sqrt{\gamma_n}\Big|e^{i\theta_{\gamma_n, n}t}-e^{i\theta_{\gamma, n}t}\Big|\Big(\int_{\R}\Big(\frac{n}{1+\gamma_nn^{-2s}|\eta|+n}\Big)^{-2s}|\sech(\eta)|^2d\eta\Big)^{\frac12}.
		\end{split}
	\end{equation}

	In order to obtain a lower bound for the integral in \eqref{Hs-LB},  we define, for $z\geq 1$
	\begin{equation}\label{F-z}
		F(z):=\int_{\R}g(z, \eta)|\;\widehat{\sech}(\eta)|^2d\eta,
	\end{equation}
	where
	\begin{equation}
		\label{g-z}
		g(z,\eta):= \Big(\frac{z}{1+\gamma_n|\eta|z^{-2s}+z}\Big)^{-2s}.
	\end{equation}
	
	It is easy to check that
	\begin{equation}\label{d-gz}
		\partial_zg(z,\eta)= -2s\Big(\frac{z}{1+\gamma_n|\eta|z^{-2s}+z}\Big)^{-2s-1}\; \frac{(1+2s)\gamma_n|\eta|z^{-2s}}{\big({1+\gamma_n|\eta|z^{-2s}+z}\big)^{2}}\geq 0
	\end{equation}
for all $-\frac12\leq s<0$, independent of $\eta$. Hence, it can be deduced that 
	\begin{equation}\label{g-inc}
		g(n,\eta)\geq g(1, \eta), \qquad \forall \; \eta\in \R,
	\end{equation}
	and consequently $F(n)\geq F(1)$. 
	
	Now, using  this information in \eqref{Hs-LB}, one gets
	\begin{equation}\label{Hs-Ist}
		A(n,t)\geq C_{\gamma}\big|e^{i\theta_{\gamma_n, n}t}-e^{i\theta_{\gamma, n}t}\big|=C_{\gamma}\big|e^{it(\theta_{\gamma_n, n}-\theta_{\gamma, n})}-1\big|.
	\end{equation}

In view of estimates \eqref{Hs-2nd} and \eqref{Hs-Ist}, one obtains from \eqref{Hs-apart} that
	\begin{equation}
		\begin{split}
			\label{Hs-apart2}
			\|u_{{\gamma_n},n}(t)-u_{{\gamma},n}(t) \|_{H^s}&\geq C_{\gamma}\big|e^{it(\theta_{\gamma_n, n}-\theta_{\gamma, n})}-1\big|-\nu_n\\
			&=C_{\gamma}\big|e^{-it(\gamma^2-\gamma_n^2)n^{-4s}}-1\big|-\nu_n.
		\end{split}
	\end{equation}
	
Now, if one chooses the sequence $\gamma_n$ satisfying $(\gamma^2-\gamma_n^2)n^{-4s}=\tau n^{\delta}$, for some $\tau>0$ and $\delta>0$ such that $4s+\delta<0$, it can easily be seen that $\gamma_n\to\gamma$ when $n\to +\infty$. On the other hand, for this choice, for some fixed $t>0$, one has
	\begin{equation}\label{donot-conv}
		\big|e^{-it(\gamma^2-\gamma_n^2)n^{-4s}}-1\big|\nrightarrow 0, \quad {\mathrm as}\;\; n\to+\infty.
	\end{equation}

Recall from \eqref{Hs-2nd} that $\nu_n\to 0$ as $n\to+\infty$,  and in the choice of the sequence $\gamma_n$ the parameter $\delta >0$ is arbitrary. Hence, from \eqref{Hs-conv2},  \eqref{Hs-apart2} and \eqref{donot-conv} one can conclude  that the evolution of the initial data that are very close in $H^s$-norm do not stay close for the time $t>0$, whenever $-\frac12\leq s<0$. Therefore, the mapping data-solution is not uniformly continuous on the bounded sets of initial data in $H^s(\T\times \R)$ whenever $-\frac12\leq s<0$, thereby completing the proof of the theorem. 

\setcounter{equation}{0}
\section{Proof of results for cylindrical domain  $\R\times \T$}\label{Section-RxT}

This section is devoted in addressing the well-posedness issues for the IVP \eqref{model} posed on the cylindrical domain  $\R\times \T$. In this case too, we provide details considering $\varepsilon =1$. The case $\varepsilon =0$ follows with simple modifications.

We start reviewing some results obtained by S. Cui and C. Guo \cite{CG2007}, for the  IVP  associated with the following  fourth-order NLS equation
\begin{equation}\label{Eq1}
\begin{cases}
    i \partial_{t} u + a \Delta u + b \Delta^{2} u = c |u|^{\sigma} u,& (t,x) \in [0,T] \times  \mathbb{R}^{n} \medskip \\
    u(0,x) = \phi(x),
\end{cases}
\end{equation}
where $a,b, c$ are real constants, $b \neq 0, c \neq 0$, and $\sigma >0$. The authors in \cite{CG2007} obtained the well-posedness results considering the $n$-dimensional case with general nonlinearity. Strichartz estimates for the associated elliptic operator were the main ingredient in the proof.

While dealing with the IVP \eqref{model} posed on $\cD=\R\times \T$, we will consider the one dimensional version of the Strichartz estimates in the $x$-variable proved in \cite{CG2007} and use it to deal with the periodic $y$-variable as a perturbation in each Fourier mode. In what follows, we review  Strichartz estimates from \cite{CG2007} associated to the IVP \eqref{Eq1} for  $n = 1$ and $\sigma = 2$.

\subsection{Review of the Strichartz estimates in purely continuous case}
We are interested in the case  $n = 1$ in this section. Let us denote by $S_{a,b}(t) \phi$ the free-solution for \eqref{Eq1}. The  associated linear operator is $ L_{a, b, x}:= (a \partial_{x}^{2} + b \partial_{x}^{4})$,  whose Fourier symbol is $- a \xi^{2} + b \xi^{4}$. We say that $(q,p)$ is an admissible pair if it satisfies
\begin{equation}\label{Eq2}
 \frac{1}{q} = \frac{1}{4} \Big(\frac{1}{2} - \frac{1}{p}\Big) \Leftrightarrow   \frac{1}{p} + \frac{4}{q} = \frac{1}{2},
\end{equation}
 with the assumption that $2 \leq p \leq \infty$. Note that, in \eqref{Eq2}, $ p = \infty$ implies $q = 8$ and $p = 2$ implies $q = \infty$. If ($p \neq 2, \infty$) we  have $q = \frac{8p}{p-2}$. Thus, $8 \leq q \leq \infty$. Moreover, if  $(\gamma, \rho)$ is an admissible pair then its conjugate pair $(\gamma', \rho')$ satisfies the following conditions. Since
\[
\frac{1}{\rho} + \frac{1}{\rho'} = 1 \Leftrightarrow \rho' = \frac{\rho}{\rho - 1}, 
\]
our consideration $2 \leq \rho \leq \infty$  implies that $1 \leq \rho' \leq 2$. On the other hand, 
\[
\frac{1}{\gamma} + \frac{1}{\gamma'} = 1 \Leftrightarrow \gamma' = \frac{\gamma}{\gamma - 1},
\]
with $8 \leq \gamma \leq \infty$,   so that $1 \leq \gamma' \leq \frac{8}{7}$. 

\medskip 
Using the notation established above, the following results hold:

\begin{proposition}\label{Strichartz-1d} Let $T_0>0$ and $0<T\le T_0$. If  $(q,p)$ and $(\gamma, \rho)$ are admissible pairs, 
then we have the following Strichartz estimates:
\begin{equation}\label{Eq3}
   \|S_{a,b}(t) \phi \|_{L_{T}^{q} L_{x}^{p}} \leq C \|\phi\|_{L^{2}} ,
\end{equation}

\begin{equation}\label{Eq4}
  \sup_{|t| \leq T} \Big\|\int_{0}^{t} S_{a,b}(t - \tau) f(\cdot, \tau) d \tau \Big\|_{2} \leq C \|f \|_{L_{T}^{q'} L_{x}^{p'}} , 
\end{equation}

\begin{equation}\label{Eq5}
\Big\| \int_{0}^{t} S_{a,b}(t - \tau) f(\cdot, \tau) d \tau \Big\|_{L^{q}_{T} L_{x}^{p}} \leq C \|f \|_{L_{T}^{\gamma'} L_{x}^{\rho'}},
\end{equation}
where the constant $C$ depends on $p$, $q$ and $T_0$. 
\end{proposition}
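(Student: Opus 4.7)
The plan is to derive \eqref{Eq3}--\eqref{Eq5} simultaneously via the $TT^{*}$/Keel--Tao framework, which reduces the Strichartz estimates for a unitary family to two ingredients: the trivial $L^{2}_{x}$ energy identity $\|S_{a,b}(t)\phi\|_{L^{2}}=\|\phi\|_{L^{2}}$, immediate from Plancherel since the symbol $-a\xi^{2}+b\xi^{4}$ is real, together with the dispersive bound
\[
\|S_{a,b}(t)\phi\|_{L^{\infty}_{x}}\le C\,|t|^{-1/4}\|\phi\|_{L^{1}_{x}},\qquad 0<|t|\le T_{0},
\]
for a constant $C=C(a,b,T_{0})$.

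The heart of the matter is this dispersive estimate, which I would prove by oscillatory integral analysis of the free kernel
\[
K_{t}(x)=\frac{1}{2\pi}\int_{\R}e^{i[x\xi + t(a\xi^{2}-b\xi^{4})]}\,d\xi,
\]
with phase $\Phi(\xi)=x\xi+t(a\xi^{2}-b\xi^{4})$ satisfying $\Phi''(\xi)=2ta-12tb\xi^{2}$ and $\Phi'''(\xi)=-24\,tb\,\xi$. In the high-frequency zone $\{|\xi|\ge R\}$ the quartic contribution dominates and a standard second-order van der Corput bound (on dyadic pieces) yields the $|t|^{-1/4}$ rate, while in the low-frequency zone $\{|\xi|\le R\}$ the lower bound on $|\Phi'''|$ away from the origin together with the finite length of the interval controls the remaining piece via van der Corput of third order. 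Calibrating the cut-off $R$ as a suitable power of $|t|^{-1}$ balances both contributions and produces the claimed $|t|^{-1/4}$ decay with all of the $a,b,T_{0}$-dependence absorbed into $C$.

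Interpolating between $S_{a,b}(t):L^{2}\to L^{2}$ and $S_{a,b}(t):L^{1}\to L^{\infty}$ then gives
\[
\|S_{a,b}(t)\phi\|_{L^{p}_{x}}\le C\,|t|^{-\frac{1}{4}(1-\frac{2}{p})}\|\phi\|_{L^{p'}_{x}},\qquad 2\le p\le\infty.
\]
The admissibility condition $\tfrac{1}{p}+\tfrac{4}{q}=\tfrac{1}{2}$ of \eqref{Eq2} is precisely the Keel--Tao compatibility relation for decay exponent $\sigma=\tfrac{1}{4}$ in dimension one, so the abstract Keel--Tao Strichartz theorem applied to the unitary group $S_{a,b}(t)$ delivers the homogeneous estimate \eqref{Eq3}, its dual/energy form \eqref{Eq4}, and the mixed retarded inhomogeneous estimate \eqref{Eq5} for all admissible $(q,p)$ and $(\gamma,\rho)$ simultaneously, with constants depending only on $p$, $q$ and $T_{0}$, as claimed.

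The principal obstacle is the dispersive estimate itself, because the symbol $-a\xi^{2}+b\xi^{4}$ mixes two homogeneities: in the pure biharmonic case $a=0$ the only critical point of $\Phi$ sits at $\xi=0$ and one obtains $|t|^{-1/4}$ essentially from Parseval/Airy-type computations, but for $a\neq 0$ the equation $\Phi'(\xi)=x+2ta\xi-4tb\xi^{3}=0$ can admit two additional real roots at which $\Phi''$ may be small. Controlling the confluence of these stationary points while preserving the sharp $|t|^{-1/4}$ rate is the delicate step in the argument of Cui--Guo \cite{CG2007}, and it is precisely here that the restriction $|t|\le T_{0}$ enters, permitting the absorption of any residual $a$-dependence into a $T_{0}$-dependent constant.
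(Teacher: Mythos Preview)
Your outline is correct and is precisely the strategy carried out in the cited reference \cite{CG2007}: a van der Corput/stationary-phase analysis of the kernel yields the local-in-time dispersive bound $\|S_{a,b}(t)\|_{L^{1}\to L^{\infty}}\lesssim_{T_{0}}|t|^{-1/4}$, and the $TT^{*}$/Keel--Tao machinery then produces \eqref{Eq3}--\eqref{Eq5} for all admissible pairs. The paper itself does not give an independent argument but simply invokes \cite{CG2007}, so your proposal in fact supplies more detail than the paper's proof, and it is aligned with the source it cites.

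One small technical remark: in your low-frequency zone $\{|\xi|\le R\}$ the third derivative $\Phi'''(\xi)=-24tb\,\xi$ vanishes at $\xi=0$, so third-order van der Corput alone does not cover a neighbourhood of the origin; the cleanest way to close the estimate is to observe that $|\Phi^{(4)}(\xi)|=24|tb|$ is a nonzero constant and invoke van der Corput of order four there (or simply bound the contribution of a fixed-length interval around $0$ trivially). This is a routine adjustment and does not affect the overall scheme.
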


\begin{proof}
For detailed proof we refer to  \cite{CG2007}, more precisely, Theorem 2.2, Theorem 2.4 and Theorem 2.5 respectively in pages  690, 692 and 694 there.
\end{proof}

We also record the following result, whose proof can be found in \cite[Corollary 2.6, p.~694]{CG2007}.
\begin{proposition}\label{DerivativeStrichartz}  For any real $s$ and any admissible pairs $(q,p)$, $(\gamma, \rho)$ we have  the following estimates 
\begin{equation}\label{CorollaryStrichartzI}
 \|S_{a,b}(t) h\|_{L^{q}(\mathbb{R}, W^{s,p})} \leq C \|h \|_{H^{s}}   
\end{equation}
and 
\begin{equation}\label{CorollaryStrichartzII}
\left\| \int_{0}^{t} S_{a,b}(t - \tau) h(\cdot, \tau) d \tau \right\|_{L^{q}((-T, T), W^{s,p})} \leq C \|h \|_{L^{\gamma'}((-T, T), W^{s, \rho'})} . 
\end{equation}
\end{proposition}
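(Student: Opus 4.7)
The proposition is a derivative-loaded version of Proposition~\ref{Strichartz-1d}, and the plan is to reduce it to that previous result by commuting the Bessel potential operator $J^s := (1-\partial_x^2)^{s/2}$ with the linear propagator $S_{a,b}(t)$. Recall that $W^{s,p}(\R)$ is defined (for all real $s$ and $1<p<\infty$) by $\|f\|_{W^{s,p}}=\|J^s f\|_{L^p}$, and in particular $\|f\|_{H^s}=\|J^s f\|_{L^2}$, so the statement is really about estimating $J^s$ applied to the free/Duhamel solution in $L^q_t L^p_x$.

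For the homogeneous inequality \eqref{CorollaryStrichartzI}, I would first observe that both $S_{a,b}(t)$ and $J^s$ are Fourier multipliers in the $x$-variable (with symbols $e^{it(-a\xi^2+b\xi^4)}$ and $(1+\xi^2)^{s/2}$ respectively), so they commute: $J^s S_{a,b}(t) h = S_{a,b}(t) J^s h$. Then I would apply estimate \eqref{Eq3} from Proposition~\ref{Strichartz-1d} to the datum $J^s h \in L^2(\R)$, which yields
\begin{equation*}
\|S_{a,b}(t) h\|_{L^q(\R, W^{s,p})} = \|S_{a,b}(t) J^s h\|_{L^q_t L^p_x} \leq C\,\|J^s h\|_{L^2} = C\,\|h\|_{H^s},
\end{equation*}
as required. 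The whole argument is valid for any real $s$ because the definition of $J^s$ and its commutation with $S_{a,b}(t)$ do not depend on the sign of $s$.

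For the inhomogeneous inequality \eqref{CorollaryStrichartzII}, the strategy is identical: I would move $J^s$ inside the Duhamel integral (again by the Fourier multiplier property and linearity) to obtain
\begin{equation*}
J^s \!\int_0^t S_{a,b}(t-\tau)h(\cdot,\tau)\,d\tau = \int_0^t S_{a,b}(t-\tau)\,J^s h(\cdot,\tau)\,d\tau,
\end{equation*}
and then apply the inhomogeneous Strichartz estimate \eqref{Eq5} from Proposition~\ref{Strichartz-1d} to $J^s h$ in place of $h$. Taking the $L^q_T L^p_x$ norm of both sides and using $\|J^s h(\cdot,\tau)\|_{L^{\rho'}} = \|h(\cdot,\tau)\|_{W^{s,\rho'}}$ (so that $\|J^s h\|_{L^{\gamma'}_T L^{\rho'}_x} = \|h\|_{L^{\gamma'}_T W^{s,\rho'}_x}$) finishes the proof.

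There is no serious obstacle here: the result is a routine corollary of Proposition~\ref{Strichartz-1d}, and the only point requiring mild attention is verifying that the commutation $J^s S_{a,b}(t)=S_{a,b}(t) J^s$ persists in the distributional sense needed to handle arbitrary real $s$ (including negative $s$), which is immediate from the Fourier transform representation. Consequently, no new dispersive analysis is required beyond what was used to establish \eqref{Eq3} and \eqref{Eq5}.
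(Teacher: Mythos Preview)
Your argument is correct and is exactly the standard route: commute the Bessel potential $J^s$ with the Fourier-multiplier propagator $S_{a,b}(t)$ and invoke the $L^2$-level estimates \eqref{Eq3} and \eqref{Eq5}. The paper does not give its own proof here; it simply records the result and refers to \cite[Corollary~2.6]{CG2007}, where the same commutation argument is carried out.
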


\medskip
\subsection{Auxiliary results for well-posedness in $H^{s}(\mathbb{R} \times \mathbb{T})$}

We define the operator associated to the linear part of the IVP \eqref{model}
\begin{equation}\label{Eq6}
 Q_{\alpha, x,y} := \partial_{x}^{2} + \alpha \partial_{x}^{4} + \partial_{y}^2 = L_{\alpha,x}  +  \partial_{y}^2, 
\end{equation}
for $\alpha \neq 0$ and $(x,y) \in \mathbb{R} \times \mathbb{T}$. Recall that, we are taking $\varepsilon =1$. Considering  $(a, b) =(1, \alpha)$ in the Strichartz estimates \eqref{Eq3}-\eqref{Eq5} on $\mathbb{R}$, in this section, we deduce new Strichartz-type estimate for the associated free solution  $e^{i t Q_{\alpha, x,y}}\phi$. Idea of the proof is based on the work of Tzvetkov-Visciglia \cite{TV-12} where the authors considered NLS equation on product spaces. In our case, the asymmetric structure of the symbol associated to  the linear problem forced to adapt several indices. On the other hand, we are working on a simple solution space compared to the one used in \cite{TV-12}.

\begin{proposition}\label{PropI}
The following estimate holds:
\begin{multline}\label{Eq7-a}
   \|D^{k}e^{i t Q_{\alpha, x,y}} f\|_{L^{q}_{t} L_{x}^{p} H_{y}^{\ell}} + \Big\| D^{k} \int_{0}^{t} e^{i (t - \tau)  Q_{\alpha, x,y}} F(\tau, x, y) d \tau \Big\|_{L^{q}_{t} L_{x}^{p} H_{y}^{\ell}}\\  \leq  C (\|D^{k}f \|_{L^{2}_{x} H_{y}^{\ell}} +\|D^{k}F \|_{L^{\gamma'}_{t} L_{x}^{\rho'} H_{y}^{\ell}} ), 
\end{multline}
where $D^k = \partial_{x}^k , \partial_{y}^k$ ($k = 0,1$), $C = C(p, q, \gamma, \rho) >0$,  $\ell \geq 0$,  and $(q, p)$, $(\gamma, \rho)$ are admissible pairs, i.e.,
\[
\frac{1}{p} + \frac{4}{q} = \frac{1}{2} =  \frac{1}{\gamma} + \frac{4}{\rho}, 
\]
with $2 \leq p , \rho \leq \infty$.   
\end{proposition}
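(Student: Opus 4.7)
The plan is to use the natural decomposition of any function $f$ on $\R\times\T$ into its Fourier series in the periodic variable,
\[
f(x,y)=\sum_{n\in\Z}f_n(x)\,e^{iny},\qquad f_n(x)=\tfrac{1}{2\pi}\int_{\T}f(x,y)e^{-iny}\,dy,
\]
and to observe that the evolution decouples modewise. Indeed, since $Q_{\alpha,x,y}=L_{\alpha,x}+\partial_y^2$ and these operators act in distinct variables, one has
\[
e^{itQ_{\alpha,x,y}}f(x,y)=\sum_{n\in\Z}e^{-itn^2}\bigl(e^{itL_{\alpha,x}}f_n\bigr)(x)\,e^{iny}.
\]
Thus, for every fixed $(t,x)$ the $H^\ell_y$ norm of $e^{itQ_{\alpha,x,y}}f(t,x,\cdot)$ is
\[
\Big(\sum_{n\in\Z}(1+n^2)^{\ell}\,\bigl|\bigl(e^{itL_{\alpha,x}}f_n\bigr)(x)\bigr|^{2}\Big)^{1/2},
\]
because the modulations $e^{-itn^2}$ are of unit modulus and do not affect absolute values.

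\textbf{Exchanging norms via Minkowski.} Since $(q,p)$ is admissible with $p\geq 2$, the identity $1/p+4/q=1/2$ forces $q\geq 8$; in particular both $p,q\geq 2$. This is precisely what is needed to apply Minkowski's integral inequality twice, bringing the $\ell^2_n$ norm outside the $L^q_t L^p_x$ norm:
\[
\bigl\|e^{itQ_{\alpha,x,y}}f\bigr\|_{L^q_tL^p_xH^\ell_y}
\;\leq\;\Big(\sum_{n\in\Z}(1+n^2)^{\ell}\,\bigl\|e^{itL_{\alpha,x}}f_n\bigr\|_{L^q_tL^p_x}^{2}\Big)^{1/2}.
\]
Then I apply the one-dimensional Strichartz estimate \eqref{Eq3} of Proposition \ref{Strichartz-1d} (with $(a,b)=(1,\alpha)$) to each Fourier mode separately, obtaining
\[
\bigl\|e^{itL_{\alpha,x}}f_n\bigr\|_{L^q_tL^p_x}\leq C\|f_n\|_{L^2_x}.
\]
Summing in $n$ with the weight $(1+n^2)^{\ell}$ and using Plancherel in $y$ yields the homogeneous bound $\|e^{itQ_{\alpha,x,y}}f\|_{L^q_tL^p_xH^\ell_y}\leq C\|f\|_{L^2_xH^\ell_y}$.

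\textbf{Inhomogeneous part and derivatives.} The Duhamel term is handled by the same modewise decomposition: writing $F(t,x,y)=\sum_n F_n(t,x)e^{iny}$, we get
\[
\int_{0}^{t}e^{i(t-\tau)Q_{\alpha,x,y}}F\,d\tau
=\sum_{n}e^{-itn^2}\!\left(\int_{0}^{t}e^{i(t-\tau)L_{\alpha,x}}\bigl(e^{i\tau n^2}F_n\bigr)d\tau\right)\!e^{iny},
\]
and the $e^{i\tau n^{2}}$ factor does not affect the $L^{\gamma'}_{t}L^{\rho'}_x$ norm. Applying Minkowski in the same way, followed by the inhomogeneous 1D Strichartz estimate \eqref{Eq5}, controls this term by $\|F\|_{L^{\gamma'}_tL^{\rho'}_xH^\ell_y}$. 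Finally, the derivative operator $D^k$ commutes with $e^{itQ_{\alpha,x,y}}$ and with the Duhamel integral: if $D^k=\partial_x^k$ it may be absorbed into the data $f$ or $F$ before applying the argument, while if $D^k=\partial_y^k$ it produces factors $(in)^k$ in each Fourier mode, which are absorbed by replacing the $H^\ell_y$ weight $(1+n^2)^\ell$ on both sides (the inequality is uniform in the weight). Combining the two estimates yields \eqref{Eq7-a}.

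\textbf{Main obstacle.} The only delicate point is ensuring that the Minkowski exchange $\|\cdot\|_{L^q_tL^p_x}(\|\cdot\|_{\ell^2_n})\leq \|\cdot\|_{\ell^2_n}(\|\cdot\|_{L^q_tL^p_x})$ is legitimate; this rests on the admissibility condition forcing $p,q\geq 2$, so no further hypothesis beyond the stated one is needed. Everything else reduces to the already proven one-dimensional Strichartz estimates together with Plancherel in $y$.
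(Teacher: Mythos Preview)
Your proposal is correct and follows essentially the same route as the paper: decompose into Fourier modes in the periodic variable, apply the one-dimensional Strichartz estimates of Proposition~\ref{Strichartz-1d} mode by mode, and use Minkowski's inequality (valid because $\max\{\gamma',\rho'\}\le 2\le \min\{p,q\}$) together with Plancherel to reassemble the $H^\ell_y$ norm. The only cosmetic difference is that the paper phrases the modewise reduction via the family of IVPs \eqref{Eq10} and invokes Proposition~\ref{DerivativeStrichartz} for the $\partial_x^k$ case, whereas you (equivalently) note directly that $\partial_x^k$ commutes with the group; both are valid and lead to the same estimate.
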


\begin{proof} 
First, consider the case $k=0$ and $\ell= 0$.  Note that, Proposition \ref{Strichartz-1d}  gives the following Strichartz estimate for the free propagators $e^{i t (L_{\alpha,x} + \beta)}$ on $\mathbb{R}$ with $\beta \in \mathbb{R}$:

\begin{equation}\label{Eq7}
\sup_{\beta  \in \mathbb{R}} \left(\|e^{i t (L_{\alpha,x} + \beta )} h \|_{L_{T}^{q} L_{x}^{p}}  + \left\| \int_{0}^{t} e^{i (t - \tau) (L_{\alpha,x} + \beta )}  H(\tau, x) d \tau \right\|_{L_{T}^{q} L_{x}^{p}}   \right ) \leq C (\| h \|_{L^{2}_{x}} + \|H\|_{L_{T}^{\gamma'} L_{x}^{\rho'}}),
\end{equation}
where $L_{\alpha,x} := \partial_{x}^2 + \alpha \partial_{x}^{4}$  and  $C = C(p, \rho, q, \gamma) >0$.

Further, we introduce 
\begin{equation}\label{DuhamelFormula}
u(t, x,y) = e^{i t Q_{\alpha,x,y}} f + \int_{0}^{t} e^{i (t - \tau) Q_{\alpha,x,y}} F(\tau, x, y) d \tau,    
\end{equation}
and notice that 
\begin{equation}\label{eq-Dh}
\begin{cases}
i \partial_{t} u +  Q_{\alpha,x,y} u = F, \qquad (t,x,y) \in \mathbb{R} \times \mathbb{R} \times \mathbb{T}, \medskip\\
u(0, x,y) = f(x,y). 
\end{cases}
\end{equation}

Now, we expand $u, f$ and $F$ using Fourier series with respect to the orthonormal basis $\{ e^{ i y n} \}_{ n \in \mathbb{Z}}$ of  $ L^{2}(\mathbb{T})$, to obtain

\begin{equation}\label{Eq8}
  u(t, x, y) = \sum_{n \in \mathbb{Z}} \widehat{u}(t, x, n) e^{ i y n},
\end{equation}

\begin{equation}\label{Eq9}
     F(t, x, y) = \sum_{n \in \mathbb{Z}} \widehat{F}(t, x, n) e^{ i y n}, 
\end{equation}
and 
\begin{equation} \label{Eq9-x}
f(x,y) =\sum_{n\in \mathbb{Z}}  \widehat{f}( x,n) e^{ i y n}.
\end{equation}

Notice that $\widehat{u}(t, x, n)$, $\widehat{F}(t, x, n)$ and $\widehat{f}(t,x,n)$ are related by the following family of IVPs:

\begin{equation}\label{Eq10}
\begin{cases}
i \partial_{t} \widehat{u}(t, x, n)  + (L_{\alpha, x} - n^{2}) \widehat{u}(t, x,n)  =\widehat{F}(t, x,n) , \; (t,x,n) \in \mathbb{R}\times \mathbb{R} \times \mathbb{Z}, \medskip \\  
 \widehat{u}(0, x, n) = \widehat{f}(x, n). 
\end{cases}
\end{equation}

In order to  simplify  the notations, let us write $u_{n} :=\widehat{u}(t, x, n)$, $F_{n} :=\widehat{F}(t, x, n) $ and  $f_{n} :=\widehat{f}(x, n) $. Applying \eqref{Eq7}  for \eqref{Eq10}, we obtain
\begin{equation}\label{Eq11}
\|u_{n}(t,x)\|_{L_{T}^{q} L_{x}^{p}} \lesssim  (\| f_{n} \|_{L^{2}_{x}} + \|F_{n}(t,x)\|_{L_{T}^{\gamma'} L_{x}^{\rho'}}),
\end{equation}
where $(q,p)$ and $(\rho, \gamma)$ are admissible pairs. 

Now, summing in $n$ the squares,  \eqref{Eq11} gives us
\begin{equation}\label{eq11-x}
\|u_{n}(t,x)\|_{\ell^{2}_{n}L_{T}^{q} L_{x}^{p}} \leq  C (\| f\|_{L^{2}_{x,y}} + \|F_{n}(t,x)\|_{\ell_{n}^{2} L_{T}^{\gamma'} L_{x}^{\rho'}}). 
\end{equation}
On the other hand, since
\begin{equation*}
\max \{ \rho', \gamma'\} \leq 2 \leq \min \{ p, q \}, 
\end{equation*} 
using the Minkowski inequality one gets from \eqref{eq11-x} that
\begin{equation}\label{eq11-xx}
\|u_{n}(t,x)\|_{L_{T}^{q} L_{x}^{p}\ell^{2}_{n}} \leq  C (\| f\|_{L^{2}_{x,y}} + \|F_{n}(t,x)\|_{ L_{T}^{\gamma'} L_{x}^{\rho'}\ell^{2}_{n}}). 
\end{equation}

Finally, using \eqref{Eq9}, \eqref{Eq10} and Plancherel's identity, we have 
\begin{equation}\label{eq11-y}
\|u\|_{L_{T}^{q} L_{x}^{p}L^{2}_{y}} \leq  C (\| f\|_{L^{2}_{x,y}} + \|F\|_{ L_{T}^{\gamma'} L_{x}^{\rho'}L^{2}_{y}}). 
\end{equation} 

To obtain the estimate \eqref{Eq7-a} with $k=\ell=0$ we apply the last inequality \eqref{eq11-y} first with $F=0$ to get the linear estimate and then with  $f=0$ to derive the estimate for the non-homogeneous term.

 The general case can be addressed with some modifications as follows. In the periodic $y$-variable, everything can be reduced to the case $\ell =0$ because we can introduce the operator $J_{y}^{\ell}:= (1 - \partial_{y}^{2})^{\frac{\ell}{2}}$ into  both sides of equation  \eqref{DuhamelFormula} which commutes with the linear group $e^{it Q_{\alpha, x, y}}$. 
 
   The case for $k \neq 0$ and $\ell =0$ can be treated splitting in two different parts. In the case when $D^k = \partial_x^k$ we perform the above procedure  using Proposition \ref{DerivativeStrichartz} instead of Proposition \ref{Strichartz-1d}. Indeed, in view of \eqref{CorollaryStrichartzI} and \eqref{CorollaryStrichartzII} we have the estimate
\begin{equation*}
\sup_{\beta \in \mathbb{R}} \left(\|\partial_{x}^{k}e^{i t (L_{\alpha,x} + \beta)} h \|_{L_{T}^{q} L_{x}^{p}}  + \left\| \partial_{x}^{k} \int_{0}^{t} e^{i (t - \tau) (L_{\alpha,x} + \beta)}  H(\tau, x) d \tau \right\|_{L_{T}^{q} L_{x}^{p}}   \right ) \leq C (\| h \|_{H^{k}_{x}} + \|\partial_{x}^{k}H\|_{L_{T}^{\gamma'} L_{x}^{\rho'}}),
\end{equation*}
where we assumed the same conditions on $p, \rho, q, \gamma$ and $C = C(p, \rho, q, \gamma) >0$.

 On the other hand, in the case when $D^{k} = \partial_{y}^{k}$ we proceed by introducing $\partial_{y}^{k}$ into \eqref{DuhamelFormula} and make similar considerations as those used for $J_{y}^{\ell}$. This completes the proof of Proposition \ref{PropI}.
\end{proof}
  
 \medskip
 The  following consequence of the Proposition \ref{PropI} will be useful. 

\begin{corollary}\label{Prop3}
Let $(q,p)$ and $(\gamma, \rho)$ admissible pairs. For any $s \geq 0$  we have 
\[
\|J_{y}^{s} e^{i t Q_{\alpha, x,y} }f \|_{L^{q}_{t} L_{x}^{p} L_{y}^{2}} + \| J_{y}^{s} \Big(\int_{0}^{t} e^{i (t - \tau) Q_{\alpha,x,y}} F(\tau) d \tau \Big)\|_{L_{t}^{q'} L_{x}^{p'} L_{y}^{2}}
\] 
\[
\lesssim \|f\|_{\mathcal{H}_{x,y}^{0,s}} + \| J^{s}_{y} F\|_{L_{t}^{\gamma'} L_{x}^{\rho'} L_{y}^{2}},
\]
where $C = C(q,p,\gamma, \rho) >0$.
\end{corollary}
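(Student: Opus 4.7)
The plan is to reduce the corollary to Proposition \ref{PropI} with $k = 0$ and $\ell = 0$, by exploiting the fact that $J_y^s := (1-\partial_y^2)^{s/2}$ is a Fourier multiplier in the $y$ variable alone and therefore commutes with the linear group.

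First I would verify the commutativity $J_y^s e^{it Q_{\alpha,x,y}} = e^{it Q_{\alpha,x,y}} J_y^s$. Since the Fourier symbol of $Q_{\alpha,x,y} = \partial_x^2 + \alpha\partial_x^4 + \partial_y^2$ in the mixed $(\xi, n)$ variable is $-\xi^2 + \alpha\xi^4 - n^2$, while the symbol of $J_y^s$ is $\langle n\rangle^s$, the two multipliers trivially commute on the Fourier side. In particular, $J_y^s$ commutes with the Duhamel integral operator $F \mapsto \int_0^t e^{i(t-\tau)Q_{\alpha,x,y}} F(\tau)\, d\tau$ as well.

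Next I would simply move $J_y^s$ through the group/Duhamel operator and apply Proposition \ref{PropI} with $k = \ell = 0$ to the functions $J_y^s f$ and $J_y^s F$. For the linear term this gives
\begin{equation*}
\|J_y^s e^{it Q_{\alpha,x,y}} f\|_{L^q_t L^p_x L^2_y} = \|e^{it Q_{\alpha,x,y}} (J_y^s f)\|_{L^q_t L^p_x L^2_y} \lesssim \|J_y^s f\|_{L^2_{x,y}} = \|f\|_{\mathcal{H}^{0,s}_{x,y}},
\end{equation*}
where the last equality is the very definition of the anisotropic norm in \eqref{Anisot-Spaces} with $(s_1, s_2) = (0, s)$. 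For the inhomogeneous term, commuting $J_y^s$ inside the time integral and applying the nonhomogeneous part of Proposition \ref{PropI} yields the bound by $\|J_y^s F\|_{L^{\gamma'}_t L^{\rho'}_x L^2_y}$, which is exactly the second term on the right-hand side of the claimed estimate. Summing the two contributions completes the proof.

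There is no real obstacle here: the whole content of the corollary is the observation that $J_y^s$ is a Fourier multiplier in $y$ which commutes with the full linear evolution, so that Proposition \ref{PropI} (obtained there only for integer $\ell$ via $H_y^\ell$, or equivalently for $k = \ell = 0$) upgrades automatically to fractional regularity $s \geq 0$ in the periodic variable. The only thing to be mindful of is matching the norms $\|J_y^s(\cdot)\|_{L^2_y} = \|\cdot\|_{H^s_y}$ and $\|J_y^s(\cdot)\|_{L^2_{x,y}} = \|\cdot\|_{\mathcal{H}^{0,s}_{x,y}}$ in order to state the final bound in the form written above.
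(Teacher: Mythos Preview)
Your proposal is correct and is precisely the argument the paper uses: in the proof of Proposition~\ref{PropI} the authors handle the general $\ell\ge 0$ case by the very same commutation of $J_y^{\ell}$ with $e^{itQ_{\alpha,x,y}}$, and Corollary~\ref{Prop3} is then just Proposition~\ref{PropI} with $k=0$, $\ell=s$, rewritten via $\|J_y^s(\cdot)\|_{L^2_y}=\|\cdot\|_{H^s_y}$. (Note that the exponents $L_t^{q'}L_x^{p'}$ on the Duhamel term in the corollary's statement appear to be a typo for $L_t^{q}L_x^{p}$, consistent with Proposition~\ref{PropI} and with your derivation.)
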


 Now, we are in position to prove the local well-posedness result for given data in the anisotropic Sobolev space on domain $\R\times \T$.
Our goal is to establish local well-posedness in the space 
\begin{equation}\label{NI-FixedPoint-Space}
Z_{T}^{0, s} :=L_{T}^{\infty}\mathcal{H}_{x,y}^{0,s } \cap G_{x,y}^{0, s}
\end{equation}
with $s>1/2$, where $G_{x,y}^{0, s}$ is the  auxiliary space defined by the norm
\[
\|f\|_{G_{x,y}^{0, s}} = \|J_{y}^{s} f\|_{L_{T}^{12} L_{x}^{6} L_{y}^{2}}.
\]
Notice that, $(12, 6)$ is an admissible pair, i.e., verifies \eqref{Eq2}.    

\subsection{Proof of Theorem \ref{Th-LWP-aniso}}
 Let $(12, 6)$ be an admissible pair and let  $\phi \in \mathcal{H}_{x,y}^{0,s}$ with  $s>\frac12$. Consider  the space $Z_{T}^{0, s}$ defined in 
\eqref{NI-FixedPoint-Space} endowed with the norm

\begin{equation}\label{Zs-norm}
\|u\|_{Z_{T}^{0, s}} = \sup_{|t|\leq T}\|u\|_{\mathcal{H}_{x,y}^{0, s }}+ \|u\|_{G_{x,y}^{0, s}}.
\end{equation}

\medskip 
Consider now the integral formulation associated to the IVP  \eqref{model}:
\[
\Phi_{\phi} (u):= e^{i t Q_{\alpha,x,y}} \phi + \int_{0}^{t} e^{i (t - \tau) Q_{\alpha,x,y}} (u(\tau) |u(\tau)|^{2}) d \tau. 
\]
We will show that for all  $\phi \in \mathcal{H}_{x,y}^{0,s}$, with $s>\frac12$, there exist a positive time $T = T(\|\phi\|_{\mathcal{H}_{x,y}^{0,s }})$ and 
$R = R(\|\phi\|_{\mathcal{H}_{x,y}^{0,s}}) >0$ such that 
$$\Phi_{\phi} (B_{Z_{T'}^{0, s}}) \subset B_{Z_{T'}^{0, s}},\quad\; T' < T.$$

From Corollary \ref{Prop3}, we obtain
\begin{equation}\label{eq-cont1s}
\begin{split}
\|\Phi_{\phi}(u)\|_{ \mathcal{H}_{x,y}^{0,s}} &\leq C_0 \|\phi\|_{ \mathcal{H}_{x,y}^{0,s}} +\int_0^T \| u |u|^2\|_{ \mathcal{H}_{x,y}^{0,s}} d\tau.
\end{split}
\end{equation}

Recalling that $H_y^{s_2}$ for $s_2>\frac12$ is a Banach algebra and an use of  H\"older's inequality yield
\begin{equation}\label{eq-cont2s}
\begin{split}
\int_0^T \|u|u|^2 \|_{ \mathcal{H}_{x,y}^{0,s}}d\tau = \int_0^T \|\,\| u\|_{H_y^{s}}^3\|_{L_x^2} d\tau
=\int_0^T \|\,\| u\|_{H_y^{s}}\|_{L_x^6}^3d\tau
\leq C T^{\frac34} \| u\|_{L_T^{12}L_x^6H_y^{s}}^3.
\end{split}
\end{equation}

Now, inserting  \eqref{eq-cont2s} in \eqref{eq-cont1s} and recalling the definition of $ {G}_{x,y}^{0,s}$-norm, one has
\begin{equation}\label{eq-cont2.1s}
\begin{split}
\|\Phi_{\phi}(u)\|_{ \mathcal{H}_{x,y}^{0,s}}
&\leq C_0 \|\phi\|_{ \mathcal{H}_{x,y}^{0,s}}  +C T^{\frac34} \| u\|_{ G_{x,y}^{0,s}}^3.
\end{split}
\end{equation}

Once again, by  Corollary \ref{Prop3}, we have 
\begin{equation}\label{eq-cont3s}
\begin{split}
\|\Phi_{\phi}(u)\|_{G_{x,y}^{0,s}}& \leq C_0 \|\phi\|_{ \mathcal{H}_{x,y}^{0,s}} +\int_{0}^{T}\|e^{i (t - \tau) Q_{\alpha,x,y}} (u(\tau) |u(\tau)|^{2})\|_{L_T^{12}L_x^{6}H_y^{s}} d \tau\\
& \leq C_0 \|\phi\|_{\mathcal{H}_{x,y}^{0,s}} +C\int_{0}^{T}\|e^{-i \tau Q_{\alpha,x,y}} (u(\tau) |u(\tau)|^{2})\|_{ \mathcal{H}_{x,y}^{0,s}} d \tau\\
&= C_0 \|\phi\|_{\mathcal{H}_{x,y}^{0,s}} +C\int_{0}^{T}\| (u(\tau) |u(\tau)|^{2})\|_{ \mathcal{H}_{x,y}^{0,s}} d \tau.
\end{split}
\end{equation}

For the second term in the last line of \eqref{eq-cont3s} we can use the estimate from \eqref{eq-cont2s} to get
\begin{equation}\label{eq-cont5s}
\|\Phi_{\phi}(u)\|_{G_{x,y}^{0,s}} \leq C_0 \|\phi\|_{ \mathcal{H}_{x,y}^{0,s}} +C T^{\frac34} \| u\|_{G_{x,y}^{0,s}}^3.
\end{equation}

Combining \eqref{eq-cont2.1s} and \eqref{eq-cont5s}, one arrives at
\begin{equation}\label{eq-cont6m}
\|\Phi_{\phi}(u)\|_{Z_{T}^{0, s}} \leq C_0 \|\phi\|_{ \mathcal{H}_{x,y}^{0,s}} +C T^{\frac34} \| u\|_{Z_{T}^{0, s}}^3.
\end{equation}

Finally, choosing $R= 2C_0 \|\phi\|_{ \mathcal{H}_{x,y}^{0,s}}$ and $T= (2CR^2)^{-\frac43}$ we prove that, for small enough $T$, the application $\Phi_{\phi}$ maps a closed ball $B_{R}$ of radius $R >0$ of $Z_{T}^{0,s}$ into  itself. Moreover, it can be shown to be a contraction on the same ball by observing that
\[
\| \Phi_{\phi} (v) - \Phi_{\phi} (w) \|_{Z_{T}^{0,s}} \lesssim \|v - w \|_{Z_{T}^{0,s}} \Big(\|v \|_{Z_{T}^{0,s}} + \|w \|_{Z_{T}^{0,s}}\Big)^{2} \lesssim 
\|v - w \|_{Z_T^{0,s}} R^{2}. 
\]

The rest of the proof follows a standard argument, so we omit the details.

\setcounter{equation}{0}
\section{Proof of results for periodic domain  $\T^2$}\label{Section-TxT}

This section is devoted to study the IVP \eqref{model} posed on the purely periodic domain $\T^2$ and prove Theorem \ref{Th-LWP-T}. As pointed out in Section \ref{sec-2}, using standard arguments, the proof  of Theorem \ref{Th-LWP-T} is a consequence of Proposition \ref{Prop-St2}. Before presenting a proof of Proposition~\ref{Prop-St2} we record the following result on compact surface.

\begin{lemma}\label{lemma-CS}
Let $\alpha<0$ and consider the following compact surface 
\begin{equation}\label{h-surface}
\mathbb{S}_{\alpha} := \{ (\eta_1, \eta_2 ,  -\alpha \eta_1^4+ \eta_2^2) \in \mathbb{R}^3 : |\eta_i|\leq 1 \}.
\end{equation}
Then, $\mathbb{S}_{\alpha}$ possesses positive definite second fundamental form.
\end{lemma}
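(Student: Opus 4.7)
My plan is to reduce the statement to a direct computation of the Hessian of the height function of the graph $\mathbb{S}_\alpha$. First I would parametrize the surface by $F(\eta_1,\eta_2) = (\eta_1,\eta_2,\phi(\eta_1,\eta_2))$ with $\phi(\eta_1,\eta_2) := -\alpha\eta_1^4 + \eta_2^2$, which is smooth on $[-1,1]^2$, and compute the upward unit normal
\begin{equation*}
N(\eta_1,\eta_2) = \frac{(-\phi_{\eta_1},-\phi_{\eta_2},1)}{\sqrt{1+|\nabla\phi|^2}} = \frac{(4\alpha\eta_1^3,-2\eta_2,1)}{\sqrt{1+16\alpha^2\eta_1^6+4\eta_2^2}}.
\end{equation*}
Since $F_{\eta_i\eta_j} = (0,0,\phi_{\eta_i\eta_j})$, the coefficients of the second fundamental form reduce to $\mathrm{II}_{ij} = \phi_{\eta_i\eta_j}/\sqrt{1+|\nabla\phi|^2}$. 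Thus $\mathrm{II}$ differs from $\mathrm{Hess}(\phi)$ only by a positive scalar factor, and positive definiteness of $\mathrm{II}$ is equivalent to positive definiteness of $\mathrm{Hess}(\phi)$.

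The second step is the explicit Hessian computation
\begin{equation*}
\mathrm{Hess}(\phi)(\eta_1,\eta_2) = \begin{pmatrix} -12\alpha\eta_1^2 & 0 \\ 0 & 2 \end{pmatrix}.
\end{equation*}
Using $\alpha<0$, both diagonal entries are nonnegative, the trace equals $2-12\alpha\eta_1^2>0$, and the determinant equals $-24\alpha\eta_1^2\geq 0$. Because the matrix is diagonal, its eigenvalues coincide with the diagonal entries, so both are strictly positive whenever $\eta_1\neq 0$. Combined with the previous reduction, this delivers strict positive definiteness of $\mathrm{II}$ away from the axis $\{\eta_1=0\}$.

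The main obstacle is exactly this degeneracy at $\eta_1=0$: the eigenvalue $-12\alpha\eta_1^2$ vanishes on an entire subcurve of $\mathbb{S}_\alpha$, so strict positive definiteness does \emph{not} hold pointwise on the full surface as stated. To make the lemma and its subsequent use in Theorem A meaningful, I expect one must interpret $\mathbb{S}_\alpha$ on dyadic pieces $|\eta_1|\sim 2^{-j}$ obtained via a Littlewood--Paley decomposition in the $\eta_1$ variable. After rescaling $\eta_1\mapsto 2^{-j}\tilde\eta_1$, the rescaled Hessian has both eigenvalues uniformly bounded below on the compact square $\{|\tilde\eta_1|,|\eta_2|\leq 1\}$ with constants independent of $j$, so the lemma applies to each piece. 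Tracking these scale factors through the Bourgain--Demeter exponent $\delta^{(n+1)/(2p)-(n-1)/4-\epsilon}$ is what I expect to produce the $\tfrac18+\epsilon$ derivative loss in Proposition \ref{Prop-St2}; the lemma itself however is purely a Hessian positivity check, carried out above.
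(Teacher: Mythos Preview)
Your computation is essentially identical to the paper's: the paper also parametrizes $\mathbb{S}_\alpha$ as a graph, computes the unit normal, and obtains the second fundamental form coefficients $e=\dfrac{-12\alpha v^2}{\sqrt{16\alpha^2 v^6+4w^2+1}}$, $f=0$, $g=\dfrac{2}{\sqrt{16\alpha^2 v^6+4w^2+1}}$, then invokes Sylvester's criterion. So at the level of method there is no difference.

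Where you diverge is in rigor. You correctly observe that the eigenvalue $-12\alpha\eta_1^2$ vanishes along the curve $\{\eta_1=0\}$, so the second fundamental form is only positive \emph{semi}-definite there and the lemma, read literally, is false on that subcurve. The paper's proof does not address this: it asserts ``$e(v,w)>0$'' and ``$eg(v,w)>0$ for any $(v,w)$'', which is simply incorrect at $v=0$. Your proposed remedy---a dyadic decomposition in $\eta_1$ with rescaling on each piece so that the principal curvatures become uniformly positive---is the standard way to repair this kind of degeneracy before feeding the surface into Bourgain--Demeter, but the paper does not carry out any such decomposition; it applies Theorem~A directly to $\mathbb{S}_\alpha$ as if the curvature were nondegenerate everywhere. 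In short, your computation matches the paper's, and the obstacle you flag is a genuine gap present in the paper's own argument rather than a defect of your approach.
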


\begin{proof}

Let $\varphi(v,w)$ be a  canonical   parametrization of  $\mathbb{S}_{\alpha}$  given by
$$
\varphi(v, w) =
(v, w, -\alpha v^4+w^2), \qquad v^2+w^2\leq 1.
$$
For this parametrization, we have
\begin{equation}
\begin{split}
\varphi_{v}& = (1, 0,  -4 \alpha  v^{3})\\
\varphi_{w}& = (0, 1, 2w  )
\end{split}
\end{equation}

so that the coefficients of the first fundamental form are given by
\begin{equation}
\begin{split}
E &:= \langle \varphi_{v}, \varphi_{v}\rangle =1 + 16 \alpha^2  v^{6} \\
F &:= \langle \varphi_{w}, \varphi_{v}\rangle =  -8\alpha  v^3w \\
G&:= \langle \varphi_{w}, \varphi_{w}\rangle   = 1 +  4 w^2.
\end{split}
\end{equation}

Also
\[
\varphi_{v} \times \varphi_{w}  = (4\alpha v^3, - 2w , 1), 
\]
and therefore $\varphi$  defines the following field of unit normal vectors 

\[
{\bf N}(v,w) = \frac{1}{\sqrt{16\alpha^2 v^6 + 4 w^2 + 1 }} ( 4 \alpha u^{3}, - 2w, 1).
\]

To find the coefficients of the second fundamental form, we compute the second derivatives of the parametrization
\begin{equation}
\begin{split}
\varphi_{vv}& = (0, 0,  -12\alpha   v^2)\\
\varphi_{vw}& = (0, 0, 0)\\
\varphi_{ww} &= (0, 0, 2 ).\\
\end{split}
\end{equation}

Thus, the coefficients of the second fundamental form are given by
\begin{equation}
\begin{split}
e(v,w)&:=\langle \varphi_{vv}, {\bf N}\rangle = \frac{ -12\alpha   v^2}{\sqrt{16\alpha^2 v^6  + 4 w^2 + 1  }}\\
f(v,w)&:=\langle \varphi_{vw},{\bf N}\rangle = 0,\\
g(v,w)&:=\langle \varphi_{ww}, {\bf N}\rangle  =   \frac{2 }{\sqrt{16\alpha^2 v^6  + 4 w^2 + 1  }}.
\end{split}
\end{equation}

So the second fundamental form is given by 
\[
\Lambda(a\varphi_v+b\varphi_w)= a^2 e(v,w) + 2 ab f(v,w) + b^2 g(v,w) =  a^2 e(v,w) +  b^2 g(v,w), \quad (a,b)\in \R^2, 
\]
with $\{\varphi_v,  \varphi_w\}$  being a basis for $T_{(v,w)}(\mathbb{S}_{\alpha})$.  Since $f=0$, the matrix associated to the second fundamental is 
\begin{center}
$  \left(
  \begin{array}{cc}
  e(v,w)  &  0  \\
    0 &  g(v,w) \\
  \end{array}
\right) $
\end{center}
which is positive definite from Sylvester's criterion. Indeed for $\alpha<0$, we have that
$$e(v,w)>0$$ and 
\[
 eg(v,w)  =  \frac{-12 \alpha v^2 }{ 16\alpha^2 v^6 + 4 w^2 + 1  } >0
\]
for any $(v,w)$.

Finally, the Gaussian curvature given by 
\[
\frac{eg - f^2}{EG - F^2} =  \frac{-12 \alpha v^2 }{[ 16\alpha^2 v^6 + 4 w^2 + 1 ]^2 } >0
\]
is also positive definite.
\end{proof}

\subsection{Proof of Proposition \ref{Prop-St2}} 
Let  $\alpha < 0$, $p\ge 4$  and consider $S_{\alpha}(t):= S_{\alpha,0}(t)=  e^{it(\alpha \partial_x^4 + \partial_y^2)}$. We are going to prove that 
\begin{equation}
\big\|S_{\alpha}(t)\phi\big\|_{L^p([0,1]_t \times \T^2)} \lesssim  N^{1-\frac7{2p}+\epsilon}\|\phi\|_{{L^2(\T^2)}}
\end{equation}
holds for any $\phi\in L^2(\T^2)$ with  ${\textrm{supp}} (\widehat{\phi}) \subset [-\sqrt{N/2}, \sqrt{N/2}]\times [-N, N]$.

The idea is to apply  the $\ell^2$ decoupling result from Theorem A stated in Section \ref{sec-2} on the surface  $\mathbb{S}_{\alpha}$  in $\R^3$ defined by \eqref{h-surface}.

For any $\phi \in L^2(\T^2)$, one has
\begin{equation}\label{Fourier-group}
e^{it(\alpha \partial_x^4 + \partial_y^2)} \phi(x,y) = \sum_{(\xi_1, \xi_2) \in \Z^2} \widehat{\phi}(\xi_1, \xi_2) e^{2 \pi i (x \xi_1 + y \xi_2 + t (- \alpha \xi_1^4+ \xi_2^2) )}.
\end{equation}

Let $N\in \N$ and consider $\xi=(\xi_1, \xi_2)\in \Z^2$ with $-\sqrt{N/2}\leq \xi_1 \leq \sqrt{N/2}$ and $-N\leq \xi_2 \leq N$. Now, we define a collection of $1/N$ separated points $\eta=(\eta_1, \eta_2)$ in the following way:
\begin{align}
&\eta_1 = \frac{1}{\sqrt{2N}}\xi_1 , \; \;\;   \eta_2 = \frac{\xi_2}{2 N}\label{etas}\\
\intertext{and also we define}
& a_{\eta} := \widehat{\phi}(\xi)\label{a-eta}.
\end{align}

From \eqref{etas} one gets
\begin{align*}
&\xi_1 = \sqrt{2N}\eta_1, \quad  \xi_2 = 2N \eta_2\\
\intertext{and}
&t (- \alpha \xi_1^4+ \xi_2^2) = t (2 N)^2 (-\alpha \eta_1^4 + \eta_2^2).
\end{align*}
So, setting
\[
x = \frac{1}{\sqrt{2N}}\,x', \quad y = \frac{1}{2N} y'\quad \mbox{and}\quad t = \frac{t'}{(2N)^2},
\]
we have 
\[
x \xi_1 + y \xi_2 = \eta_1 x' + \eta_2 y'.
\]

Note that, in this setting, one has
$$	
x'\in \big[0, \sqrt{2N}\big],\quad y'\in [0, 2N]\quad \text{and}\quad  t'\in [0, 4N^2].
$$

In what follows we consider $p\ge 4$ and we recall that $\widehat{\phi} \in \ell^2(\Z^2)$ with support in the rectangle $[-\sqrt{N/2}, \sqrt{N/2}]\times [-N, N]$. The above change of variables shows that
\begin{equation}\label{eq5.x1}
\begin{split}
\int_{[0,1]_t\times\T^2  } |&e^{it(\alpha \partial_x^4 + \partial_y^2)} \phi|^p dx dy dt\\
&  \lesssim   \frac{1}{N^{7/2}} \int_{[0, 4N^2]\times [0,  \sqrt{2N}] \times [0, 2N] }
\Big|\sum_{(\eta_1, \eta_2)} a_{\eta} e^{2 \pi i (x' \eta_1 + y' \eta_2 + t' (-\alpha\eta_1^4 + \eta_2^2))} \Big|^p dt' dx' dy'. 
\end{split}
\end{equation}

Now, using  the periodicity in the  $x'$ and $y'$ variables,  the estimate in \eqref{eq5.x1} yields
\begin{equation}\label{eq5.x2}
\begin{split}
\big\|S_{\alpha}&(t)\phi\big\|^p_{L^p([0,1]_t \times \T^2)}\\
& \lesssim  \frac{1}{N^{11/2}} \int_{[0, 4N^2]\times [0, \sqrt{2}N^{3/2}] \times [0, 2N^2] }
\Big|  \sum_{(\eta_1, \eta_2) } \widehat{\phi}(\eta_1, \eta_2) e^{2 \pi i (x' \eta_1 + y' \eta_2 + t' (-\alpha\eta_1^4 + \eta_2^2))} \Big|^p dx' dy' dt' \\
& \lesssim  \frac{N^{1/2}}{N^6} \int_{[0, 4N^2]\times [0, 2N^2] \times [0, 2N^2] }
\Big|  \sum_{(\eta_1, \eta_2) } \widehat{\phi}(\eta_1, \eta_2) e^{2 \pi i (x' \eta_1 + y'\eta_2 + t' (-\alpha\eta_1^4 + \eta_2^2))} \Big|^p dx' dy' dt'.
\end{split}
\end{equation}

So, setting $P_N:=[0, 4N^2]\times [0, 2N^2] \times [0, 2N^2]$ we have from \eqref{eq5.x2} that
\begin{equation}\label{eq5.x3}
\begin{split}
\big\|S_{\alpha}(t)\phi&\big\|_{L^p([0,1]_t \times \T^2)}\\
&\lesssim N^{\frac1{2p}}\left(\frac{1}{N^6} \int_{P_N}
\Big|  \sum_{(\eta_1, \eta_2) } \widehat{\phi}(\eta_1, \eta_2) e^{2 \pi i (x' \eta_1 + y' \eta_2 + t' (-\alpha\eta_1^4 + \eta_2^2))} \Big|^p dx' dy' dt'\right)^{1/p}.	
\end{split}
\end{equation}

\medskip 
From Lemma \ref{lemma-CS} the compact surface $\mathbb{S}_{\alpha}$ given by \eqref{h-surface} possesses positive definite second fundamental form.  Therefore, applying the decoupling Theorem A with $n=3$,  $\delta^{1/2} = \frac1N$ (i.e., $\delta = N^{-2}$) and $p\ge \frac{2(3+1)}{3-1}=4$, one gets from \eqref{eq5.x3} that 
\begin{equation}\label{eq5.x4}
\big\|S_{\alpha}(t)\phi\big\|_{L^p([0,1]_t \times \T^2)}\lesssim  N^{\frac1{2p}} N^{-2\big(\frac2p-\frac12 -\epsilon\big)}\|\phi\|_{{L^2(\T^2)}}.	
\end{equation}

In particular, taking  $p=4$ in \eqref{eq5.x4} we arrive at 
\begin{equation*}
	\big\|S_{\alpha}(t)\phi\big\|_{L^4([0,1]_t \times \T^2)}\lesssim  N^{\frac1{8}+2\epsilon}\|\phi\|_{{L^2(\T^2)}},	
\end{equation*}
and the proof is finished.

\section{Concluding Remarks }\label{sec-6}
We considered the non-isotropic nonlinear Schr\"odinger equation \eqref{model} that appears in the fiber arrays posed on two dimensional product domains, viz., cylindrical and purely periodic domains. New well-posedness results for the associated IVP are obtained. Let us note some important points:

\medskip 
\begin{enumerate}[(a)]
\item In the case when the IVP was posed on the $\T\times \R$ with $\alpha <0$, we are able to explore the good behaviour of the associated symbol to obtain Strichartz estimate similar to the one for the classical NLS equation in 2d thereby getting global well-posedness (w.p.) result for small data  in $H^s(\T\times \R)$ whenever $s\geq 0$. This result is proved to be sharp  by showing that the application data-solution is not uniformly continuous.
\begin{table}[h!]
	\centering
	\begin{tabular}{|c|@{}c@{}|}\hline
\cellcolor{olive!15} $\T\times\R$
&
	\begin{tabular}{|c|c|c|}
		\hline
		\cellcolor{olive!15} $\boldsymbol{\varepsilon}$  &\cellcolor{olive!15}  $\boldsymbol{\alpha}$ &\cellcolor{olive!15}  \textbf{local w.p.\, Sobolev regularity} $s$ \\
		\hline
		 0,\, 1 & $<0$ & $s\ge 0$ with small data for $s=0$\\
		\hline
		 0,\, 1 & $>0$ &  open question\\
		\hline
	\end{tabular}
	\tabularnewline\hline
	\end{tabular}
\end{table}

\item When the problem is posed on the cylindrical domain $\R\times \T$, the symbol behaves very badly preventing us to obtain estimates analogous to Lemma~\ref{crucial-lem}.  In this domain (in general case $\alpha \neq 0$) we worked on the anisotropic Sobolev spaces  exploiting the  one dimensional Strichartz estimate in the $x$-variable proved in \cite{GC2009} to obtain a new Strichartz-type estimate for a family of equations involving Fourier mode of  periodic $y$-variable.  Finally, summing up the resulting estimates we obtained a full Strichartz-type estimate and used it to get local well-posedness result in the anisotropic Sobolev spaces   $\mathcal{H}_{x,y}^{s_1,s_2}(\R\times\T)$ for $s_1\geq 0$ and $s_2>\frac12$. We do not know if this result is sharp. So, this is an interesting open question.
\begin{table}[h!]
	\centering
	\begin{tabular}{|c|@{}c@{}|}\hline
\cellcolor{olive!15} $\R\times \T$
&
	\begin{tabular}{|c|c|c|}
		\hline
		\cellcolor{olive!15} $\boldsymbol{\varepsilon}$  &\cellcolor{olive!15}  $\boldsymbol{\alpha}$ &\cellcolor{olive!15}  \textbf{local w.p.\,for anisotropic regularity} $(s_1,s_2)$ \\
		\hline
		0,\, 1 & $\alpha \neq 0$ & $s_1\geq 0$ and $s_2>1/2$.\\
		\hline
		0,\, 1 & $\alpha \neq 0$ &  open question for $s_1\geq 0$ and $0\le s_2 \le 1/2.$ \\
		\hline
	\end{tabular}
	\tabularnewline\hline
	\end{tabular}
\end{table}

\item In the purely periodic case, i.e., for the IVP posed on $\T^2$, we used the $\ell^2$ decoupling theory developed by Bourgain and Demeter \cite{BD2015}. To apply this theory, homogeneity in each variable of the linear part plays a crucial role in obtaining a compact hypersurface with a positive definite second fundamental form. The lack of homogeneity in the $x$-variable created an additional difficulty in obtaining such surface. However, for  a particular case of the IVP \eqref{model} with $\varepsilon =0$ we could ensure homogeneity in both $x$ and $y$ variables separately. With this consideration we  obtained an  $L^4-L^2$-Strichartz estimate with $\frac18+\epsilon$ derivative loss thereby getting local well-posedness result in $H^s(\T^2)$ for $s>\frac14$. Having this information at hand, one may ask if a similar result can be obtained for the original IVP \eqref{model}. It is a very interesting and challenging problem on which the authors are planning to work in the future project. 
\begin{table}[h!]
	\centering
	\begin{tabular}{|c|@{}c@{}|}\hline
\cellcolor{olive!15} $\T^2$
&
	\begin{tabular}{|c|c|c|}
		\hline
		\cellcolor{olive!15} $\boldsymbol{\varepsilon}$  &\cellcolor{olive!15}  $\boldsymbol{\alpha}$ &\cellcolor{olive!15}  \textbf{local w.p.\, Sobolev regularity} $ s$ \\
		\hline
		0 & $<0$ & $ s>1/4$ \\
		\hline
		0 & $<0$ & open question for $s\le 1/4$\\
		\hline
		1 & $<0$ & open question\\
		\hline
		0, 1 & $>0$ & open question\\
		\hline
	\end{tabular}
	\tabularnewline\hline
	\end{tabular}
\end{table}
\end{enumerate}

 Finally, we record the following IVP
\begin{equation}\label{model-2}
\begin{cases}
i \partial_t u +  \varepsilon\partial_x^2u + \partial_y^2u+\alpha_1  \partial_x^{4}u +\alpha_2 \partial_x^{6}u = \pm |u|^2 u, & t\in \R,\; (x, y) \in \cD,\medskip \\
u(0, x, y) = \phi(x,y),   &(x, y) \in \cD,
\end{cases}
\end{equation}
where $u$ is a complex valued function, $\varepsilon \in \{0, 1\}$ and  $\alpha_1, \alpha_2 \in \R$.  For $\varepsilon=1$, this model was proposed in \cite{FP-1, FP-2} in the context of fiber arrays.

\medskip
We believe that the technique developed in this article for the IVP \eqref{model} can be adapted for the IVP \eqref{model-2} posed on $\T\times \R$ considering appropriate signs of $\alpha_1$ and $\alpha_2$ so that one can solve the corresponding cubic polynomial as in \eqref{biquadratic-sol}, and usual adaptation when posed on $\R\times\T$ . However, for the IVP \eqref{model-2} posed on $\T^2$, due to non-homogeneity in the linear part, it is not clear if one can use the same technique as that for the IVP \eqref{model} even with $\varepsilon =0$. So, it is an interesting open problem.

\medskip
Biharmonic NLS equation  is another interesting problem to consider on cylinders and purely periodic domain $\T^2$. We believe that the recent theory developed in \cite{GMO} by Guth-Maldague-Oh could be applied to handle the problem in $\T^2$. These are the problems we are working in our ongoing project.
\medskip

\section*{Acknowledgments}
The authors thank the anonymous referee for valuable comments that helped improve the presentation of this work. The authors are also grateful to Dr. Yuzhao Wang for his insightful comments, which helped identify the correct regularity threshold for the local well-posedness result in the purely periodic case.
The first and second authors extend their thanks to the Department of Mathematics, UNICAMP, Campinas for kind hospitality and excellent working conditions, which greatly facilitated the initiation of this work.  The second author acknowledges supports from FAPESP, Brazil (Grants \# 2024/20513-7 \& 2023/06416-6) and FAPEMIG, Brazil (project \# RED- 00133-21). The third author acknowledges supports from FAPESP, Brazil (Grants \# 2024/10613-4 \& 2023/06416-6) and CNPq, Brazil (\# 307790/2020-7), and thanks the University of Birmingham, UK, for its kind hospitality, where part of this work was carried out.\\

\medskip


\noindent
{\bf Conflict of interest statement.} 
On behalf of all authors, the corresponding author states that there is no conflict of interest.\\

\noindent 
{\bf Data availability statement.} 
Datasets generated during the current study are available from the corresponding author on request.

\medskip

\bibliographystyle{plain}

\begin{thebibliography}{9}

\bibitem{AAT} A. B. Aceves, C. De Angelis, S. K. Turitsyn; 
{\em Multidimensional solitons in fiber arrays},
Opt. Lett. {\bf 19} (1995) 329--331.

\medskip
\bibitem{BCP2021} A. Barron, M. Christ, B. Pausader; 
{\em Global endpoint Strichartz estimates for Schr\"odinger equations on the cylinder}, 
Nonlinear Analysis {\bf 206} (2021) 112172.

\medskip
\bibitem{BD2015} J. Bourgain and C. Demeter, \textit{The proof of the \(\ell^2\)-Decoupling Conjecture}, Annals of Mathematics, \textbf{182} (2015), 351--389.


\medskip
\bibitem{Bo1993} J. Bourgain; 
{\em Fourier transform restriction phenomena for certain lattice subsets and application to nonlinear evolution equations I, Schr\"odinger equations},
Geom. Funct. Anal. {\bf 3} (1993), 107--156.

\medskip
\bibitem{Bo1999} J. Bourgain; 
{\em Global Solutions of Nonlinear Schr\"odinger Equations}, 
AMS Colloquium Publications, Vol. 46, Amer. Math. Soc., Providence, RI, 1999.

\bibitem{BGT-05} N. Burq, P.  Gerard, N.  Tzvetkov;  {\em Bilinear eigenfunction estimates and
the nonlinear Schr\"odinger equation on surfaces},  Invent. Math. {\bf 159} (2005) 187--223.

\medskip
\bibitem{CW1990} T. Cazenave and F. Weissler; 
{\em The Cauchy problem for the critical nonlinear Schr\"odinger equation in $H^s$},
Nonlinear Anal. {\bf 14} (1990), 807--836.
\medskip

\bibitem{CG2007} S. Cui, C. Guo; \textit{Well-posedness of higher-order nonlinear Schrödinger equations in Sobolev spaces $H^{s}(\R^{n})$ and applications}, Nonlinear Analysis, \textbf{67} (2007) 687--707. 

\medskip

\bibitem{FIP} G. Fibich, B. Ilan, G. Papanicolaou;   {\em  George Self-focusing with fourth-order dispersion,}  SIAM J. Appl. Math. {\bf 62} (2002), no. 4, 1437--1462.

\medskip
\bibitem{FIS} G. Fibich, B. Ilan, S. Schochet; 
{\em  Critical exponents and collapse of nonlinear Schrödinger equations with anisotropic fourth-order dispersion}, 
Nonlinearity {\bf 16} (2003), no. 5, 1809--1821.

\medskip
\bibitem{FP-1} G. Fibich, G. Papanicolaou; 
{\em A modulation method for self-focusing in the perturbed critical nonlinear Schr\"odinger equation},
Phys. Lett. A {\bf 239} (1998) no. 3, 167--173.

\medskip
\bibitem{FP-2} G. Fibich, G. Papanicolaou; 
{\em Self-focusing in the perturbed and unperturbed nonlinear Schr\"odinger equation in critical dimension},
SIAM J. Appl. Math. {\bf 60} (2000) no. 1, 183--240. 

\medskip
\bibitem{GC2008} C. Guo, S. Cui; 
{\em Solvability of the Cauchy problem of non-isotropic Schr\"odinger equations in Sobolev spaces}, 
Nonlinear Analysis: Theory, Methods $\&$ Applications, \textbf{68}, (2008)  768--780,

\medskip
\bibitem{GC2009} C. Guo, S. Cui; 
{\em Well-posedness of the Cauchy problem of high dimension non-isotropic fourth-order Schrödinger equations in Sobolev spaces}, 
Nonlinear Anal. {\bf 70} (2009) no. 10, 3761--3772. 


\medskip
\bibitem{GMO} L. Guth, D. Maldague, C. Oh; {\em $\ell^2$ decoupling theorem fir surfaces in $\R^3$}, Analysis \& PDE {\bf 19}  (2026) 167--202.
\medskip
\bibitem{HN2021} N. Hayashi and P. I. Naumkin; 
{\em Modified scattering for the higher-order anisotropic nonlinear Schrödinger equation in two space dimensions},
J. Math. Phys. \textbf{62}, (2021),  071502.

\medskip
\bibitem{HTT2014} S.  Herr, D. Tataru, N. Tzvetkov;
{\em Strichartz estimates for partially periodic solutions to Schr\"odinger equations in 4d and applications},
J. Reine Angew. Math. {\bf 690} (2014), 65--78.


\medskip
\bibitem{NP2015} F.  Natali, A. Pastor;
{\em The fourth-orther dispersive nonlinear Schr\"odinger equation: orbital stability of standing wave},
SIAM J. Applied Dynamical Systems  {\bf 14} no. 3 (2015), 1326--1347.


\medskip 
\bibitem{Oh} T. Oh;
{\it  A remark on norm inflation with general initial data for the cubic nonlinear Schr\"odinger equations in negative Sobolev spaces},
Funkcial. Ekvac. {\bf60} (2017), 259--277.


\medskip
\bibitem{SG2019}  H. Su, C. Guo; 
{\em The solution of anisotropic sixth-order Schr\"odinger equation},
Math. Methods Appl. Sci. {\bf 43} (2020), no. 4, 1868--1891.

\medskip
\bibitem{TT2001} H. Takaoka, N. Tzvetkov; 
{\em On 2D Nonlinear Schr\"odinger Equations with Data on $\R\times\T$}, 
Journal of Functional Analysis {\bf 182} (2001) 427--442.

\medskip
\bibitem{TTV2014} S. Terracini, N. Tzvetkov, N. Visciglia; 
{\em The nonlinear Schr\"odinger equation ground states on product spaces},
Anal. PDE {\bf 7} (2014), no. 1, 73--96.

\medskip
\bibitem{Tsu1987} Y. Tsutsumi;
{\em $L^2$ solutions for nonlinear Schr\"odinger equation and nonlinear groups},
Funkcial Ekvac. {\bf 30} (1987), 115--125.


\medskip
\bibitem{TV-12}  N. Tzvetkov, N. Visciglia; 
{\em Small data scattering for the nonlinear Schr\"odinger equation on product spaces}, 
Comm. on PDE {\bf 37} (2012) 125--135.


\medskip
\bibitem{W2006}
A-M. Wazwaz;
{\em Exact solutions for the fourth order nonlinear Schr\"odinger equations with cubic and power law nonlinearities},
Math. Comput. Modelling, {\bf 43} (2006), 802--808.


\medskip
\bibitem{WF2002} S. Wen, D. Fan; 
{\em Spatiotemporal instabilities in nonlinear Kerr media in the presence of arbitrary higher-order dispersions}, 
J. Opt. Soc. Amer. {\bf 19} (2002), 1653--1659.

\end{thebibliography}

\end{document}